\newcommand{\mat}{\left[\begin{array}}
\newcommand{\rix}{\end{array}\right]}
\numberwithin{equation}{section} \numberwithin{equation}{section}
\newtheorem{proposition}{Proposition}
\newtheorem{Theorem}{Theorem}[section]
\newtheorem{Corollary}[Theorem]{Corollary}
\newtheorem{lemma}[Theorem]{Lemma}
\newtheorem{Example}[Theorem]{Example}
\newtheorem{Remark}[Theorem]{Remark}
\numberwithin{equation}{section}
\begin{document}
\title{\bf Intra-Basis Multiplication of Polynomials Given in Various Polynomial Bases}
\author{{S. Karami$^1$,~ M. Ahmadnasab$^2$,~M. Hadizadeh$^4$ and~A. Amiraslani$^{3,4\footnote{Corresponding Author: \texttt{amirhosseinamiraslan@capilanou.ca}}}$}
\\
\\
{\em \small   $^1$ Department of Mathematics, Institute for Advanced Studies in Basic Sciences (IASBS), Zanjan, Iran} \\
{\em \small   $^2$ Department of Mathematics, University of Kurdistan, Sanandaj, Iran} \\
{\em \small   $^3$ Faculty of Mathematics, K. N. Toosi University of Technology, Tehran, Iran
 } \\
{\em \small  $^4$ School of STEM, Department of Mathematics, Capilano University, North Vancouver, BC, Canada}}
\maketitle{}

%\title{Intra-Basis Multiplication of Polynomials Given in Various Polynomial Bases}

\begin{abstract}
Multiplication of polynomials is among key operations in computer algebra which plays an important role in developing techniques for other commonly used polynomial operations such as division, evaluation/interpolation, and factorization. Despite its success, at least in dealing with orthogonal polynomial bases and the extensive research in that area, using explicit  representations for multiplications of general polynomials in ``degree-graded'' as well as ``non-degree-graded'' polynomial bases without any change of basis has not been exhaustively studied yet.  Even though it is tempting to convert a given polynomial basis to the monomial or one of the orthogonal polynomial bases, it should be noted that  %( is important to note that)
 a change of basis  may  increase the error propagation not present in the given bases which in turn causes numerical instability. In this paper, we present formulas and techniques for polynomial multiplications expressed in a variety of well-known polynomial bases.  In particular, we take into consideration degree-graded polynomial bases including, but not limited to orthogonal polynomial bases and non-degree-graded polynomial bases including the Lagrange and the Bernstein polynomials bases.
 Our approach is mainly about defining  operational matrices, which intrinsically serve as  basis functions, for determining  explicit matrix-vector representations of intra-basis multiplications of polynomials in a given basis by basis vectors of fixed degrees.  This representation simply  uses either the parameters of three-term recurrence relations (for degree-graded polynomial bases) or lifting relations (for non-degree-graded polynomial bases) in that basis. %it is shown that
 The proposed framework often leads to well-structured operational matrices. Finally, an application of the presented operational matrices in constructing the stochastic Galerkin matrices is provided.
\\
\\
%Normaly,  (one knows ....) it is tempting to convert (change) a given polynomial basis to the monomial or orthogonal basis, however it is important to note that a change of basis  has a potantial to probogate (incress) computational errors not present in the given bases of the  original problem which (can?) causes (leads)  to numeical instability. Accordingly, all of the described polynomial multiplication formulas and techniques in this work, %which are mostly presented in matrix-vector forms,
%preserve the basis in which the polynomials are given.  We develop tow algorithms  for intra-basis polynomial multiplication  in a variaty of  general (universal) degree-garded or non degree-graded polynomial bases.

 %Furthermore, using the results of direct multiplication of polynomials, we devise techniques for intra-basis polynomial division in the polynomial bases. A generalization of the well-known ``long division'' algorithm to any degree-graded polynomial basis is also given. The proposed framework deals with matrix-vector computations which often leads to well-structured matrices. Finally, in particular, an application of the presented techniques in constructing the Galerkin representation of polynomial multiplication operators is illustrated for discretization of a linear elliptic problem with stochastic coefficients.

{\it Keywords}:
Polynomial arithmetic; Degree-graded \& non degree-graded polynomial  bases; Polynomial multiplication; Lifting process; Stochastic Galerkin matrices
\\
\\
{MSC[2010]}: 15B99; 08A40; 33C47; 80M10.
\end{abstract}

%\end{frontmatter}

\section{Introduction}

% A natural question that has received less attention than the monomial basis is whether using the other polynomial bases could change the difficulty of polynomial multiplication. More precisely, would it be possible to have multiplication algorithms  for a variaty of polynomial basis expressed in other than the monomial basis.

There is an important and ongoing thread of research into polynomial computation using polynomial bases other than the standard monomial power basis. Such problems have interesting applications in several areas including  approximation theory \cite{a, Giorgi2012IEEE, bb, k}, cryptography \cite{app3}, coding \cite{app2} and computer science \cite{app1}.  An important question that has not received enough attention is if it would be possible to have multiplication algorithms  for a variety of polynomials expressed in bases other than the monomial basis.

%The investigations in~\cite{amiraslani2004EACA, corless2004EACA, shakoori2004EACA} specifically involve computation with polynomials expressed in the Lagrange basis, or, in other words, polynomial computation directly by values.

A key motivation for this interest in polynomial computation using alternatives to the monomial basis is that conversion between bases can be unstable (more commonly so from other bases to the monomial basis) and the instability increases with the degree (see e.g., \cite{hermann1996NA}). The complications arising from such computations motivate explorations of hybrid symbolic-numeric techniques for polynomial computation that are relevant to researchers interested in computer algebra and numerical analysis.

For instance, investigations in~\cite{berrut2004SIAM} and~\cite{higham2004IMA} show that working directly in the Lagrange polynomial basis is both numerically stable and efficient, more stable and efficient than had been heretofore credited widely in the numerical analysis community. These and similar results strengthen the motivation for examining algorithms for direct manipulation of polynomials given in polynomial bases other than the standard monomial power basis.  Figure~\ref{fig:diagarm} summarizes this idea which we intend to use here as well. This work attempts the dashed road. The symbols P, S, ${\rm P_M}$ and ${\rm S_M}$ stand for the ``\underline{P}roblem'' to be solved in an arbitrary basis, the ``\underline{S}olution'' in the same basis, the ``\underline{P}olynomial after conversion to \underline{M}onomial basis'', and the ``\underline{S}olution in the \underline{M}onomial basis'', respectively.
\vspace{-0.4cm}

\begin{figure}[H]
\label{fig:diagarm}
\begin{equation}
\begin{array}{ccc}
  {\rm P} & \dashrightarrow & {\rm S} \\
  \Downarrow &  & \Uparrow \\
  {\rm P_M} & \Rightarrow & {\rm S_M} \nonumber
\end{array}
\end{equation}
\caption{}\label{fig:diagarm}
\end{figure}
\vspace{-0.5cm}

This work is mainly geared toward implementing methods for direct multiplication of polynomials given in polynomial bases. To this end, we look at two important families of polynomial bases:
\begin{enumerate}
  \item degree-graded polynomial bases such as orthogonal polynomial bases and Newton basis, and
  \item non-degree-graded polynomial bases including the Lagrange and Bernstein polynomial bases.
\end{enumerate}

Most of the proposed techniques are based on matrix-vector representations. This enables us to use some known and straightforward techniques from linear algebra which make things a lot easier to follow and, if necessary, extend.

There are  some fast algorithms in literature  for  direct multiplication of polynomials in certain polynomial bases. However, these approaches  are mostly limited to orthogonal polynomial bases, and due to  some stability issues,  it is generally  impossible  to extend them to various non-orthogonal degree-graded polynomial bases as well as non-degree-graded polynomial bases. In the past three decades, some important methods for fast orthogonal  transformations expressed in a variety of matrix-vector structures have been developed (see e.g.. \cite{30, 1, 32, 20, 21, 14, 3, 35, 2}). %A hierarchical off-diagonal low rank  matrices  \cite{1,2},  an eigenvalue decomposition involving a semiseparable matrix [20, 21, 32], and  Trigonometric-like behavior of orthogonal polynomials via asymptotic expansions [14, 25, 30, 35]. More  recently,  a diagonally-scaled Hadamard products  decomposition involving  Toeplitz and Hankel matrices  has exploited  to derive some  fast FFT-based algorithms for computing  the conversion matrices.

Besides,  writing the product of two polynomials $P_n(x)$ and $P_m(x)$ of degrees $m$ and $n$, respectively, given in an orthogonal polynomial basis as the linear expansion of polynomials in the same basis is a well-known problem called ``the linearization of the product''. The goal is to find the coefficients $g_{mnk} $ such that $P_n(x)P_m(x)=\sum_{k=0}^{n+m}g_{nmk} P_k(x)$.
	Methods and approaches for computing the $g_{nmk}$  have been
	developed so far. For classical families of orthogonal polynomial bases, explicit expressions have been obtained, usually in terms of generalized hypergeometric series, using important  characterizing properties: recurrence relations, generating functions, orthogonality
	weights, etc. (see e.g., \cite{Ronveaux, Andrews1999Cambridge, Markett1994ConstrApprox, Sanchez1999Physics} and references  therein).

We intend to find direct multiplication of polynomials in degree-graded as well as non-degree-graded polynomial bases. In particular and in comparison with~\cite{kleindienst1993chemistry}, all we need to know here about an orthogonal polynomial basis such as but not limited to the Chebyshev polynomial basis (and other degree-graded polynomial bases for that matter) are the coefficients of its ``three-term recurrence relation''.  However, to the best of our knowledge, there has not been any comprehensive study in the literature on direct multiplication of polynomials in various polynomial bases. As such, it is important to implement algorithms to handle direct multiplication on polynomials in such bases. This is for instance of great interest to rigorous computing which aims to guarantee numerical approximation of functions through certified bounds on the results.

 Our main concern in this work is to introduce matrix-vector representations of polynomial multiplications with either specific three-term recurrence relations for degree-graded polynomial bases or lifting relations for non-degree-graded polynomial bases. The suggested ways for representing polynomial multiplications inherit properties, such as computational complexity and memory involved, from ideas that exploit the recurrence or lifting relations between the basic functions of each class of polynomials. As such, no redundant arithmetic operations, besides those of the original idea, exist. Although we do not intend to examine or compare the complexity properties obtained from this form of representation, there are many simple indications that these polynomial multiplication representations not only lend themselves to linear algebra tools to achieve such multiplications, but also the inherent explicit relations make them ready for more effective and innovative implementations.

%There are also some prominent works on direct division of polynomials given in polynomial bases which we briefly mention here. In particular, ~\cite{barnett1984LinearAlgebra} uses the ``Comrade matrix'' to simultaneously determine the greatest common divisor as well as the quotient and remainder of the division of two polynomials given in a degree-graded polynomial basis (the author only mentions orthogonal bases, but the idea can actually be extended to other degree-graded polynomial bases as well). The paper ~\cite{buse2008CAGD}, extends the classical Euclidean division algorithm to the Bernstein polynomial basis using an algorithm based on a simple ring isomorphism that converts the division problems from the Bernstein polynomial basis to an equivalent problem in the monomial basis. Moreover,~\cite{minimair2008ASCM} devises algorithms similar to the classical long division method for direct division of polynomials in non-degree-graded bases (i.e., the Lagrange and Bernstein polynomial bases). Note that the aim of the current study is to provide a direct division approach that does not require basis conversion, therefore our proposed techniques here are all based on the presented direct multiplication methods in matrix-vector forms.

The structure of this paper is as follows. We start by reviewing some basic properties of  a variety of polynomials including degree-graded and non-degree-graded polynomial bases in Section 2.  Section 3 expresses the main result  of the paper related to the structure of an operational matrix for the  intra-basis multiplication of two arbitrary polynomials with certain degree given in a polynomial basis. The  representation of direct multiplication of  general polynomials is discussed  in Section 4. To this end, the  multiplication of a basis element of a degree-graded polynomial basis by a basis vector of the same polynomial basis with a given degree is represented in a matrix-vector form. Section 5 concerns the direct polynomial multiplication formulas in two common non-degree-graded polynomial bases, i.e. the Bernstein and Lagrange polynomial  bases.  To derive those formulas, we show how to rewrite a polynomial given in a non-degree-graded polynomial basis of certain degree as a polynomial in the same basis with a higher degree using ``lifting'' matrices. Finally, we illustrate in Section 6 an application of the presented techniques which arises in the discretization of linear partial differential equations with random coefficient functions.

%===============================================================================================================
\section{Preliminaries and some notations}
%\subsection{Degree-graded Bases}\label{degdefi}
We go over some basic definitions and preliminary results related to various polynomial bases in this section.
\subsection{Degree-graded polynomial bases}
Consider a family of real polynomials
$\{\phi_j(x)\}_{j=0}^{\infty}$ with $\phi_j(x)$ of degree $j$
which satisfy a three-term recurrence relation:
\begin{equation} \label{eq.rec}
x \phi_j(x)=\alpha_j \phi_{j+1}(x) +\beta_j \phi_j(x) +
\gamma_j\phi_{j-1}(x), \quad \quad j=0,1,2,\ldots,
\end{equation}
where the $\alpha_j,\;\beta_j,\;\gamma_j$ are
real, $\phi_{-1}(x)=0,~~ \phi_0(x)=1$, and  $  \alpha_j = \frac{k_j}{k_{j+1}} \ne 0 $, with  $k_j$ is the
leading coefficient of $\phi_{j}(x)$.

Generally, any sequence of polynomials $ \{\phi_j(x)\}_{j=0}^\infty $, with $ \phi_j(x) $ of degree $ j $ is {\it degree-graded}, satisfies~\eqref{eq.rec} and obviously forms a linearly independent set, but is not necessarily orthogonal~\cite{amiraslani2016differentiation}. For instance, one can easily observe that the standard basis and Newton basis also satisfy
\eqref{eq.rec} with $ \alpha_j=1,~ \beta_j=0,~ \gamma_j=0 $ and $ \alpha_j=1,~ \beta_j=\tau_j,~ \gamma_j=0 $, respectively,and the $ \tau_j $ are the nodes where the function values are given (see e.g. ~\cite{amiraslani2019differentiation} for more details).

%\subsection{Two Important Non-Degree-Graded Bases}
\subsection{Non-degree-graded polynomial bases}
Among other things, in a  {\it non-degree-graded} polynomial basis of degree $n$, i.e., $\{\upsilon_{j, n}(x)\}_{j=0}^{n}$, each basis element $\upsilon_{j, n}(x)$ is itself of degree $n$ and $\sum_{j=0}^{n}{\upsilon}_{j, n}(x)= 1$. It is known that the elements of a non-degree-graded polynomial basis do not form three-term recurrence relations similar to~\eqref{eq.rec}. However, they have other important and interesting properties that enable us to link their basis elements to polynomial bases of different degrees. %In particular, we are interested in two of the most important non-degree-graded polynomial bases, namely the Bernstein and the Lagrange polynomial basis.

In this work, we particularly take two well-known classes of non-degree-graded polynomial bases, namely the Bernstein and the Lagrange polynomial bases into consideration.  We first  recall some classical definitions taken from ~\cite{farouki1988CAD, berrut2004SIAM}:  %(for further details and properties of these polynomials see  and refferences therein)
%Let us start with the Bernstein polynomial basis:
\\

A Bernstein polynomial (also called B\' ezier polynomial) defined over the interval $[a,b]$ has the form
\begin{equation}
{b_{j,n}}(x) = \binom{n}{j} \frac{{{{(x - a)}^j}{{(b - x)}^{n - j}}}}{{{{(b - a)}^n}}},\qquad j=0,\ldots, n.\label{6}
\end{equation}

Note that this is not a typical scaling of the Bernstein polynomials, however this scaling makes matrix notations related to this basis slightly easier to write. Observe that the Bernstein polynomials are nonnegative in $[a,b]$, i.e., $b_{j,n}(x)\ge 0,$ for all $x\in [a,b].$

For a function $f(x) \in C[a,b]$, a polynomial approximation $P(x)$ of degree $n$ written using Bernstein basis is of the form
\begin{equation}\label{7}
P(x) =\sum\limits_{j = 0}^n {{c_j}{b_{j,n}}(x)}= {\bf c}^T{\bf b}_n,
\end{equation}
where ${\bf b}_n(x)=\left[ {{b_{0,n}}(x)},  \cdots,  {{b_{n,n}}(x)}
\right]^T$,  ${\bf c}= [c_0, \cdots, c_n]^T$  %the B\' ezier coefficients $(j=0,\ldots,n)$
with $c_j=f( a+\frac{(b-a)j}{n} ).$
\\

The following lemma from~\cite{buse2008CAGD} describes an important property of the Bernstein polynomial basis:
\begin{lemma}\label{bern1lem}
	If $ b_{j,n}(x) $ is the j-th Bernstein polynomial basis of a degree $n$, then
	\begin{equation*}\label{bern2}
	b_{j,n}(x)=\left( \frac{j+1}{n+1}\right) b_{j+1,n+1}(x)+\left(\frac{n+1-j}{n+1}\right) b_{j,n+1}(x), \qquad  j=0, \dots, n.
	\end{equation*}
\end{lemma}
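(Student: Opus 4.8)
The plan is to prove this by \emph{direct verification}: substitute the explicit closed form~\eqref{6} for each Bernstein polynomial appearing on the right-hand side, simplify the scalar prefactors using binomial-coefficient identities, and recognize that the two resulting terms combine into $b_{j,n}(x)$. Since both sides are polynomials given by explicit formulas, no recurrence machinery is needed; everything reduces to algebra over the fixed interval $[a,b]$.

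First I would expand the two terms on the right. Writing out the definitions gives $b_{j+1,n+1}(x)=\binom{n+1}{j+1}(x-a)^{j+1}(b-x)^{n-j}/(b-a)^{n+1}$ and $b_{j,n+1}(x)=\binom{n+1}{j}(x-a)^{j}(b-x)^{n+1-j}/(b-a)^{n+1}$. The key step is then to absorb the rational prefactors into the binomial coefficients via the two identities $\frac{j+1}{n+1}\binom{n+1}{j+1}=\binom{n}{j}$ and $\frac{n+1-j}{n+1}\binom{n+1}{j}=\binom{n}{j}$, both of which follow immediately from cancelling factorials. After this reduction, each term carries the \emph{same} coefficient $\binom{n}{j}/(b-a)^{n+1}$, differing only in the exponents of $(x-a)$ and $(b-x)$.

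Finally, I would factor out the common quantity $\binom{n}{j}(x-a)^{j}(b-x)^{n-j}/(b-a)^{n+1}$ from both terms, leaving the bracketed sum $(x-a)+(b-x)$. The decisive observation is the telescoping identity $(x-a)+(b-x)=b-a$, which cancels one power of $(b-a)$ in the denominator and yields exactly $\binom{n}{j}(x-a)^{j}(b-x)^{n-j}/(b-a)^{n}=b_{j,n}(x)$, completing the proof. The only place requiring care -- the ``main obstacle,'' such as it is -- is the bookkeeping of the two binomial identities; once those are verified the remaining algebra is routine, and the argument holds uniformly for every $j=0,\dots,n$.
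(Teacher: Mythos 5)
Your proof is correct. Note, however, that the paper itself offers no proof of this lemma at all: it is imported verbatim from the cited reference (Bus\'e and Goldman) and used as a black box to build the Bernstein lifting matrix ${\bf T}_{n,n+1}$ in \eqref{firstliftbern}. So there is no internal argument to compare against; what you have written is a self-contained replacement for the citation. Your verification is the standard degree-elevation computation and it checks out in every detail, including the one point where care is genuinely needed here: the paper's non-standard scaling of the Bernstein basis, with $(b-a)^n$ in the denominator of \eqref{6}. Concretely, the two absorption identities
\begin{equation*}
\frac{j+1}{n+1}\binom{n+1}{j+1}=\binom{n}{j},
\qquad
\frac{n+1-j}{n+1}\binom{n+1}{j}=\binom{n}{j}
\end{equation*}
are exactly right, both right-hand terms then share the factor $\binom{n}{j}(x-a)^{j}(b-x)^{n-j}/(b-a)^{n+1}$, and the bracket $(x-a)+(b-x)=b-a$ cancels one power of $(b-a)$, recovering $b_{j,n}(x)$ under the paper's normalization. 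The exponent bookkeeping is also correct: $b_{j+1,n+1}$ contributes $(x-a)^{j+1}(b-x)^{n-j}$ and $b_{j,n+1}$ contributes $(x-a)^{j}(b-x)^{n+1-j}$, so the common factor you extract is the right one for every $j=0,\dots,n$, including the endpoints $j=0$ and $j=n$.
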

\vspace{0.3cm}

The next important non-degree-graded of interest is the Lagrange polynomial basis:

Suppose that a function $f(x)$ is sampled at
$n+1$ distinct points $\tau_0,\,\tau_1,\ldots,\tau_n$, and write
$p_j:=f(\tau_j)$. This may also be shown as $\{(\tau_j, p_j)\}_{j=0}^{n}$. The Lagrange basis polynomials of degree $n$ are then defined by
\begin{equation}
\label{eq.lag} L_{j, n}(x)=\frac{\ell_n(x)w_{n,j}}{x-\tau_j},\quad \quad
j=0,1,\ldots,n
\end{equation}
where the ``weights'' $w_{n,j}$ are  $ w_{n,j}=\prod_{m=0,\,m\ne j}^n\frac{1}{\tau_j-\tau_m},$
and $ \ell_n(x)=\prod_{m=0}^n(x-\tau_m).$

Then $P(x)$ can be
expressed in terms of its samples in the following form
\begin{equation}\label{eq:polyL}
P(x)= {\bf p}^T{\bf L}_n(x),
\end{equation}
where ${\bf p}= \left[ p_0,  \cdots, p_n\right]^T$ and ${\bf L}_n(x)= \left[L_{0, n}(x),  \cdots, L_{n, n}(x)\right]^T$.
\\
The Lagrange polynomial basis has  also  the following  main  property which is due in \cite{berrut2004SIAM}:
\begin{lemma}{\rm (From \cite{berrut2004SIAM})}\label{lag1lem}
	If we eliminate a node, say $\tau_k$, of a Lagrange polynomial basis of degree $n$ with nodes $\tau_i$, we have
	$$ 	L_{i,n-1}(x)=\frac{{\ell}_{n-1}(x)w_{n-1,i}}{x-\tau_i},\qquad i=0,1,\dots,n;\quad i \neq k $$
	where $ \ell_{n-1}(x)=\frac{\ell_n(x)}{x-\tau_k} $ and $ w_{n-1,i}= w_{n, i}(\tau_i-\tau_k) $, then
	\begin{equation*}\label{lag1}
	L_{i,n-1}(x)=-\frac{w_{n,i}}{w_{n,k}}L_{k,n}(x)+ L_{i,n}(x).
	\end{equation*}
\end{lemma}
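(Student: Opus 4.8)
The plan is to prove the identity by a direct computation, starting from the explicit definition of the reduced-degree basis polynomial $L_{i,n-1}(x)$ and substituting the two stated relations $\ell_{n-1}(x)=\ell_n(x)/(x-\tau_k)$ and $w_{n-1,i}=w_{n,i}(\tau_i-\tau_k)$. This rewrites $L_{i,n-1}(x)$ as a single rational function in $\ell_n(x)$ over the denominator $(x-\tau_i)(x-\tau_k)$, namely $\ell_n(x)\,w_{n,i}\,(\tau_i-\tau_k)/[(x-\tau_k)(x-\tau_i)]$.

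In parallel, I would expand the claimed right-hand side $-\frac{w_{n,i}}{w_{n,k}}L_{k,n}(x)+L_{i,n}(x)$ using the degree-$n$ definitions $L_{i,n}(x)=\ell_n(x)w_{n,i}/(x-\tau_i)$ and $L_{k,n}(x)=\ell_n(x)w_{n,k}/(x-\tau_k)$. The coefficient $-w_{n,i}/w_{n,k}$ is tailored precisely so that the weight $w_{n,k}$ cancels in the first term, leaving a common factor $w_{n,i}\,\ell_n(x)$ multiplying the difference $\tfrac{1}{x-\tau_i}-\tfrac{1}{x-\tau_k}$.

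The decisive step is the partial-fraction collapse
\[
\frac{1}{x-\tau_i}-\frac{1}{x-\tau_k}=\frac{\tau_i-\tau_k}{(x-\tau_i)(x-\tau_k)},
\]
which turns the expanded right-hand side into exactly the expression obtained for $L_{i,n-1}(x)$ above. I do not expect any serious obstacle here: the statement is an algebraic identity of rational functions, and the only point requiring care is that $\tau_k$ is a distinct node with $\tau_i\neq\tau_k$, so that the factor $\tau_i-\tau_k$ is nonzero and the reduced weights $w_{n-1,i}$ are well-defined. I would conclude by noting that both sides agree as rational functions and therefore coincide on their common domain, which is all that the lemma asserts.
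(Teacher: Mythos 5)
Your proposal is correct: substituting $\ell_{n-1}(x)=\ell_n(x)/(x-\tau_k)$ and $w_{n-1,i}=w_{n,i}(\tau_i-\tau_k)$ gives $L_{i,n-1}(x)=\ell_n(x)\,w_{n,i}\,(\tau_i-\tau_k)/\bigl[(x-\tau_i)(x-\tau_k)\bigr]$, and the partial-fraction identity collapses the right-hand side to the same expression, with the hypothesis of distinct nodes guaranteeing $\tau_i-\tau_k\neq 0$. Note that the paper itself offers no proof of this lemma (it is quoted from the cited barycentric-interpolation reference), so there is nothing to compare against; your direct computation is the standard verification and is exactly what the omitted argument amounts to.
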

\subsection{Important notations}\label{important.note}
We list below some notations used throughout the paper:
\begin{itemize}
 \item ~~  $\phi_{j, n}(x)$:  The $j$-th (i.e., $j$-th degree) polynomial of a degree-graded polynomial basis of degree $n$.

 \item ~~  $\alpha_j,\;\beta_j,\;\gamma_j$: The real coefficients of  three-term recurrence relation of a degree-graded polynomial.

% \item ~~ ${\bf c}$:  The real basis coefficients  vector form of degree $n$ defined as ${\bf c}=[c_0, ..., c_n]^T$.

% \item ~~ $\theta_{i,j}(x)$: The $i$-th base of degree $j$ in the basis.

 \item ~ ${\bf \Theta}_n (x)$: The vector form of a general  polynomial basis of degree $n$  defined as ${\bf \Theta}_n(x)=[\theta_{0,n}(x), \cdots, \theta_{n,n}(x)]^T$, where $\theta_{i,n}(x)$ is $i$-th basis element for $i=0, 1, \cdots, n$.

 \item ~~ ${ \bf \Phi}_n (x)$:  The vector form of a degree-graded polynomial basis of degree $n$ defined as ${\bf\Phi}_{n}(x)=  [\phi_{0}(x), \cdots, \phi_{n}(x)]^T$.

% \item ~~$ b_{j,n}(x)$: The  $j$-th  Bernstein  polynomial of degree $n$ defined on $[a,b]$.

 %\item ~~${\bf b}_{j, n}(x)$: The $j$-th basis elements of the Bernstein polynomial bases of degree  $n$.

 \item ~~ ${\bf b}_{n}(x)$: The  Bernstein polynomial basis of degree $n$ defined as ${\bf b}_{n}(x)= \mat{cccc}{\bf b}_{0, n}(x)& \cdots &{\bf b}_{n, n}(x)\rix^T$, where ${\bf b}_{i, n}(x)$ for $i= 1, 2, \cdots, n$ is the $i$-th basis element of the Bernstein polynomial bases of degree  $n$ defined on the interval $[a, b]$.

% \item~~ $ L_{j, n}(x)$: The  $j$-th Lagrange  polynomial of degree $n$.

 % \item~~ ${\bf L}_{j, n}(x)$: The $j$-th basis elements of the Lagrange polynomial bases of degree  $n$.

 \item~~   ${\bf L}_{n}(x)$: The Lagrange polynomial basis of degree $n$ defined as  ${\bf L}_{n}(x)= \mat{cccc}{\bf L}_{0, n}(x)& \cdots &{\bf L}_{n, n}(x)\rix^T$, where ${\bf L}_{i, n}(x)$ for $i= 1, 2, \cdots, n$ is the $i$-th basis element of the Lagrange polynomial bases of degree  $n$.

\item ~~$\Psi(x)$ and $ \Xi(x)$: The  matrix-vector representation of  arbitrary polynomials  of degrees $n$ and $m$, respectively,  in terms of  the basis vector ${\bf \Theta}(x)$ defined as $~~\Psi(x)= {\bf \psi}^{(m)} {\bf \Theta}_m(x)$ and $ \Xi(x)= {\bf \xi}^{(n)} {\bf \Theta}_n(x)$, where ${\bf \psi}^{(m)}= \mat{ccccc}\psi_0& \cdots &\psi_m\rix$, and ${\bf \xi}^{(n)}= \mat{ccccc}\xi_0&\cdots &\xi_n\rix $.

\item  ~~ ${\bf \tilde{H}}_{n,k}$: The operational matrix of the size $(n+ 1)\times (n+m+1)$ of the multiplication of a polynomial of degree $0\leq k\leq m$ in a given basis by its basis vector of a given degree $n$.

 \item ~~ ${\bf \mathcal{H}}_{n,m}$: The operational matrix of the size $(n+ 1)\times (n+m+1)$  of the multiplication of a polynomial of degree $n$ in a given polynomial basis by a polynomial of degree $m$ in the same polynomial basis.

\item~~ ${\bf T}_{n, m}$: The  lifting matrix between the Bernstein polynomial basis of degree $n$ and the Bernstein polynomial basis of degree $m (> n)$.

\item~~ ${\bf R}_{n, m}$: The  lifting matrix of the Lagrange polynomial basis of degree $n$ and the Lagrange polynomial basis of degree $m (> n)$.

 \item ~~$\otimes$:  Kronecker product of matrices.

%\bullet ~~ ${\bf D}_{\phi, n}$: The differentiation operational matrix for a degree-graded polynomials basis ${\bf\Phi}_n(x)$.
%polynomials of degrees $n$ and $m$, respectively, in a degree-graded basis ${\bf\Phi}(x)$ defined as $ \Xi(x)= \mat{ccccc}\xi_0&\xi_1& \cdots &\xi_{n-1}&\xi_n\rix {\bf \Phi}_n(x)~~$ and $~~\Psi(x)= \mat{ccccc}\psi_0&\psi_1& \cdots &\psi_{m-1}&\psi_m\rix {\bf \Phi}_m(x)$.
\end{itemize}
\section{Main Result}\label{mainthm}
In this section, we state our main result which provides an insight into  the rest of this paper.
In order to provide our main result about intra-basis multiplication of  polynomials, let us  assume $\Psi(x)$ and $\Xi(x)$  be two arbitrary polynomials  of degrees $m$ and $n$, respectively, given in a polynomial basis as
	 \begin{equation}\label{eq.PsiXi.polys}
	 \begin{aligned}
	 \Psi(x)&= {\bf \psi}^{(m)} {\bf \Theta}_m(x),\\
	 \Xi(x)&= {\bf \xi}^{(n)} {\bf \Theta}_n(x),
	 \end{aligned}
	 \end{equation}
	 for ${\bf \psi}^{(m)}= \left[\psi_0 ~~ \psi_1~~ \cdots ~~\psi_m  \right]$,~~ ${\bf \xi}^{(n)}= \left[\xi_0~~\xi_1~~ \cdots ~~\xi_n\right]$,   ${\bf \Theta}_m(x)=[\theta_{0,m}(x), \cdots, \theta_{m,m}(x)]^T$, and ${\bf \Theta}_n(x)=[\theta_{0,n}(x), \cdots, \theta_{n,n}(x)]^T$, where $\theta_{i,n}(x)$ denotes the $i$-th basis element in the polynomial basis.

Using the introduced notations in Section~\ref{important.note}, we give the following key lemma:
\begin{lemma}\label{lemOperationalMatrix}
 For any non-negative  integers $n, m$ and $k$, where   $0 \leq k \leq m$,
 there exists a unique operational matrix ${\bf \tilde{H}}_{n,k}$ of size $(n+1) \times (n+m+1)$  such that:
\begin{equation}\label{eqq1}
\theta_{k,m}(x){\bf \Theta}_n(x)={\bf \tilde{H}}_{n,k}{\bf \Theta}_{n+m}(x).\end{equation}
	\end{lemma}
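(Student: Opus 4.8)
The plan is to argue entrywise and then assemble the resulting coefficients into a single matrix, relying on one structural fact: the components of ${\bf \Theta}_{n+m}(x)$ form a basis of the space $\mathbb{P}_{n+m}$ of real polynomials of degree at most $n+m$. First I would note that the left-hand side of \eqref{eqq1} is the column vector whose $i$-th entry ($i=0,\ldots,n$) is the scalar polynomial $\theta_{k,m}(x)\,\theta_{i,n}(x)$, so it suffices to expand each such product in the target basis ${\bf \Theta}_{n+m}(x)$ and read off the associated row of ${\bf \tilde{H}}_{n,k}$.

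The first genuine step is a degree count, which I would carry out uniformly for both families. In a degree-graded basis one has $\theta_{k,m}(x)=\phi_k(x)$ of degree $k\le m$ and $\theta_{i,n}(x)=\phi_i(x)$ of degree $i\le n$, while in a non-degree-graded basis $\theta_{k,m}(x)$ and $\theta_{i,n}(x)$ have degree exactly $m$ and $n$, respectively. In either case $\deg\theta_{k,m}\le m$ and $\deg\theta_{i,n}\le n$, so every product $\theta_{k,m}(x)\,\theta_{i,n}(x)$ is a polynomial of degree at most $n+m$ and therefore lies in $\mathbb{P}_{n+m}$.

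Next I would invoke that $\{\theta_{0,n+m}(x),\ldots,\theta_{n+m,n+m}(x)\}$ is a basis of $\mathbb{P}_{n+m}$. For degree-graded families this follows from the linear independence recorded after \eqref{eq.rec} together with $\dim\mathbb{P}_{n+m}=n+m+1$; for the Bernstein and Lagrange families it is the classical fact that these $n+m+1$ polynomials span $\mathbb{P}_{n+m}$. Consequently each product admits a unique expansion
\[
\theta_{k,m}(x)\,\theta_{i,n}(x)=\sum_{j=0}^{n+m}\bigl({\bf \tilde{H}}_{n,k}\bigr)_{ij}\,\theta_{j,n+m}(x),\qquad i=0,1,\ldots,n.
\]
Taking ${\bf \tilde{H}}_{n,k}$ to be the $(n+1)\times(n+m+1)$ matrix whose $(i,j)$ entry is the coefficient above makes \eqref{eqq1} hold by construction, and uniqueness of each expansion forces uniqueness of the matrix.

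The step I expect to be the main obstacle is the spanning claim in the non-degree-graded case: there every basis element already has the maximal degree $n+m$, so a degree-staircase argument is unavailable and one must instead appeal to the specific structure (the partition-of-unity property together with the lifting relations of Lemmas~\ref{bern1lem} and~\ref{lag1lem}, or a direct verification of the linear independence of the Bernstein/Lagrange polynomials). Everything else is linear-algebraic bookkeeping, and the explicit entries of ${\bf \tilde{H}}_{n,k}$ are deferred to the later sections.
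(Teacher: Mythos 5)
Your proof is correct, but it takes a slightly different route from the paper's, and the difference is worth noting. The paper's own proof factors through the monomial basis: it expands the entries of $\theta_{k,m}(x){\bf \Theta}_n(x)$ and of ${\bf \Theta}_{n+m}(x)$ in the standard basis, obtains the operational matrix there, and then transfers it to the given basis via the invertible change-of-basis matrix between that basis and the standard one. You skip the monomial detour entirely: each product $\theta_{k,m}(x)\,\theta_{i,n}(x)$ lies in $\mathbb{P}_{n+m}$, the $n+m+1$ entries of ${\bf \Theta}_{n+m}(x)$ form a basis of that space, so each product has a unique coefficient row and these rows assemble into ${\bf \tilde{H}}_{n,k}$. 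The two arguments rest on the very same fact --- invertibility of the transformation to the standard basis is precisely the statement that the entries of ${\bf \Theta}_{n+m}(x)$ are a basis of $\mathbb{P}_{n+m}$ --- but your version isolates that fact explicitly and, usefully, flags exactly where it needs genuine verification: in the non-degree-graded case no degree-staircase argument is available, and one must appeal to the classical linear independence of the Bernstein/Lagrange families (or to the lifting relations of Lemmas~\ref{bern1lem} and~\ref{lag1lem}). The paper buries this same point in the phrase ``it can be easily verified that there is always an invertible transformation matrix,'' so your proof is the more self-contained of the two, while the paper's is shorter because it delegates everything to that single change-of-basis claim.
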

\begin{proof}
The proof is fairly straightforward. Note that theoretically, in any polynomial basis, we can always expand and then uniquely write the entries of $\theta_{k,m}(x){\bf \Theta}_n(x)$ as well as the entries of ${\bf \Theta}_{n+m}(x)$ in terms of the standard basis elements. That way, we can find the operational matrix in the standard basis.

Moreover, it can be easily verified that there is always an invertible transformation matrix between any given polynomial basis and the standard basis. That gives us the desired operational matrix in the given basis.
%	\begin{equation}
%	\phi_{k,m}(x){\phi}_{j,n}(x),\end{equation}
\end{proof}

This lemma shows the crucial role that the operational matrix  ${\bf \tilde{H}}_{n,k}$ plays in performing intra-basis multiplications of polynomials for a wide variety of polynomial  bases.  Consequently, we can state the following important theorem:
% that can be derived straightforwardly:
%In the forthcoming sections, we further clarify the crucial role of the operational matrices  ${\bf \tilde{H}}_{n,k}$, which  accomplish the intra-multiplication of polynomials for a wide variety of polynomial  bases.  One of our main results in this work  can now  be  summarized as follows:
%
\begin{Theorem}\label{th1}
	Let 	$~\Xi(x)$ and $~\Psi(x)$ be two arbitrary polynomials  of degrees $n$ and $m$, respectively as given in \eqref{eq.PsiXi.polys}. Then we can find their multiplication through	
	\begin{equation}\label{eqqt1b}
	\Xi(x)\Psi(x)= {\bf \xi}^{(n)} \mathcal{H}_{n,m}{\bf\Theta}_{n+m}(x),
	\end{equation}
where
	\begin{equation}\label{eqqt1}
	{\bf \mathcal{H}}_{n,m}= \sum_{k= 0}^{m}{\psi_k{\bf \tilde{H}}_{n, k}}.
	\end{equation}
\end{Theorem}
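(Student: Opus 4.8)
The plan is to derive the product formula directly from the definitions in \eqref{eq.PsiXi.polys} together with the operational-matrix identity \eqref{eqq1} of Lemma~\ref{lemOperationalMatrix}, so that the entire argument is a short chain of substitutions. First I would write out the product of the two polynomials explicitly as
\begin{equation*}
\Xi(x)\Psi(x)= \left({\bf \xi}^{(n)} {\bf \Theta}_n(x)\right)\left({\bf \psi}^{(m)} {\bf \Theta}_m(x)\right),
\end{equation*}
and observe that since $\Psi(x)$ is a scalar-valued polynomial, it commutes freely with the vector ${\bf \Theta}_n(x)$. Expanding $\Psi(x)=\sum_{k=0}^{m}\psi_k\,\theta_{k,m}(x)$ in the given basis, I would rewrite the product as ${\bf \xi}^{(n)}\!\left(\sum_{k=0}^{m}\psi_k\,\theta_{k,m}(x){\bf \Theta}_n(x)\right)$, which isolates exactly the vector expressions $\theta_{k,m}(x){\bf \Theta}_n(x)$ to which the lemma applies.

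Next I would invoke Lemma~\ref{lemOperationalMatrix} term by term: for each $k$ with $0\le k\le m$ the lemma guarantees a unique operational matrix ${\bf \tilde{H}}_{n,k}$ with $\theta_{k,m}(x){\bf \Theta}_n(x)={\bf \tilde{H}}_{n,k}{\bf \Theta}_{n+m}(x)$. Substituting this into the sum and pulling the common factor ${\bf \Theta}_{n+m}(x)$ out to the right (all the ${\bf \tilde{H}}_{n,k}$ share the same size $(n+1)\times(n+m+1)$, so the sum of matrices is well defined) yields
\begin{equation*}
\Xi(x)\Psi(x)= {\bf \xi}^{(n)}\left(\sum_{k=0}^{m}\psi_k\,{\bf \tilde{H}}_{n,k}\right){\bf \Theta}_{n+m}(x).
\end{equation*}
Setting ${\bf \mathcal{H}}_{n,m}=\sum_{k=0}^{m}\psi_k\,{\bf \tilde{H}}_{n,k}$ as in \eqref{eqqt1} recovers \eqref{eqqt1b} immediately, and this matrix inherits the common dimension $(n+1)\times(n+m+1)$, consistent with the notation introduced in Section~\ref{important.note}.

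The argument is essentially a linearity-plus-substitution computation, so I do not expect a genuine obstacle; the only point requiring care is bookkeeping on the dimensions and on the fact that scalar multiplication by $\psi_k\theta_{k,m}(x)$ distributes over the entries of the basis vector. In particular, I would be careful to justify that the finite sum can be interchanged with the matrix--vector multiplication, which is immediate since ${\bf \Theta}_{n+m}(x)$ does not depend on $k$. One might also remark that the matrix ${\bf \mathcal{H}}_{n,m}$ so constructed is itself the unique operational matrix realizing the full product $\Xi(x)\Psi(x)$ in the $(n+m)$-degree basis; this follows from the uniqueness clause of Lemma~\ref{lemOperationalMatrix} applied to the combined expansion, and it is worth stating explicitly since it shows the decomposition in \eqref{eqqt1} is not merely one valid choice but the canonical one.
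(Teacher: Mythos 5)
Your proof is correct and takes essentially the same route as the paper's: expand $\Psi(x)=\sum_{k=0}^{m}\psi_k\theta_{k,m}(x)$, use that the scalar factor commutes with the vector ${\bf \Theta}_n(x)$, apply Lemma~\ref{lemOperationalMatrix} term by term, and collect the matrices into ${\bf \mathcal{H}}_{n,m}$ as in \eqref{eqqt1}. Your closing uniqueness remark is a harmless addition beyond what the paper states, though it really rests on the linear independence of the entries of ${\bf \Theta}_{n+m}(x)$ rather than on the lemma's uniqueness clause, which is stated only for single basis elements $\theta_{k,m}(x)$.
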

\begin{proof}
	We have
	\begin{equation}
	\begin{aligned}
	\Xi(x)\Psi(x)&={\bf \xi}^{(n)} {\bf \Theta}_n(x) \sum_{k=0}^{m}\psi_k\theta_{k,m}(x)\\
	&= {\bf \xi}^{(n)} \sum_{k=0}^{m}\psi_k\theta_{k,m}(x) {\bf \Theta}_n(x)\\
	&= {\bf \xi}^{(n)} \sum_{k=0}^{m}\psi_k{\bf \tilde{H}}_{n,k}{\bf \Theta}_{n+m}(x)\\
	&= {\bf \xi}^{(n)} {\bf \mathcal{H}}_{n,m}{\bf \Theta}_{n+m}(x)
	\end{aligned}
	\end{equation}
	
%	From \eqref{eqq1}, we have $\phi_i(x){\Phi}_m(x)= {\bf H}_{m+i}{\Phi}_{m+i}$, for $i=0, 1, \cdots, n$. Moreover, we add enough zero columns to each ${\bf H}_{m+i}$ to make them all of the same size, $(m+1)\times (m+n+1)$, and call them ${\bf \tilde{H}}_{m+n, i}$. We can now write the multiplication of the polynomials in the form of:
%	\begin{equation*}
%	\Xi(x)\Psi(x)= \mat{ccccc}\psi_0&\psi_1& \cdots &\psi_{m-1}&\psi_m\rix\left[ \begin{array}{c; {2pt/2pt}c;{2pt/2pt}c; {2pt/2pt}c c  }
%	\xi_0{\bf I}_{m+1}&\xi_1{\bf I}_{m+1}&...&\xi_n{\bf I}_{m+1}
%	\end{array} \right]\left[
%	\begin{array}{ccc}
%	{\bf \tilde{H}}_{m+n, 0} \\ \hdashline[2pt/2pt]
%	{\bf \tilde{H}}_{m+n, 1} \\ \hdashline[2pt/2pt]
%	\vdots \\ \hdashline[2pt/2pt]
%	{\bf \tilde{H}}_{m+n, n}
%	\end{array}
%	\right]{\bf \Phi}_{m+n}(x),
%	\end{equation*}
%	or equivalently
%	$$\Xi(x)\Psi(x)= \mat{ccccc}\psi_0&\psi_1& \cdots &\psi_{m-1}&\psi_m\rix{\bf \mathcal{H}}_{n,m}{\bf\Phi}_{m+n}(x),$$
%	where ${\bf \mathcal{H}}_{n,m}= \sum_{i= 0}^{n}{\xi_i{\bf \tilde{H}}_{m+n, i}}$ and the proof is complete.
\end{proof}

This theorem indicates that intra-basis multiplications of polynomials depend on the structure of the operational matrices ${\tilde{{\bf H}}}_{n,k}$.  In the forthcoming sections,  we focus on the structures of these operational matrices for a general type of degree-graded polynomial  bases including orthogonal bases as well as two commonly used non-degree-graded polynomial bases, namely  the Lagrange and Bernstein polynomial bases.

\subsection{Powers of a Polynomial}

An interesting result of Theorem~\ref{th1} is a formula for integer powers of an arbitrary  polynomial in a given polynomial basis which can be  expressed as follows:
\begin{Corollary}
	If $\Xi(x)= {\bf \xi}^{(n)} {\bf \Theta}_n(x)$, then  for a positive integer $ p (> 1)$:
	\begin{equation}\label{gardedn}
	\displaystyle{\Xi}^p(x)= {\bf \xi}^{(n)}(\prod_{j=1}^{p-1}{\bf \mathcal{H}}_{j\times n,n}){\bf \Theta}_{p\times n}(x),
	\end{equation}
such that  $\displaystyle{{\bf \mathcal{H}}_{j\times n,n}= \sum_{k= 0}^{n}\xi_i {\bf \tilde{H}}_{j\times n, k}}, ~(j= 1,  \cdots, p-1)$, where ${\bf \tilde{H}}_{j\times n, k}$ is given in Lemma \ref{lemOperationalMatrix}.
\end{Corollary}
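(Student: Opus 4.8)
The plan is to argue by induction on the exponent $p$, using Theorem~\ref{th1} as the single ``one multiplication'' step and exploiting the fact that $\Xi^p(x)$ is again a polynomial, of degree $p\times n$, expressible in the basis ${\bf \Theta}_{p\times n}(x)$. At each stage the freshly computed $\Xi^p(x)$ serves as the left factor, while the original $\Xi(x)$ (of degree $n$, with coefficients $\xi_k$) plays the role of the right factor $\Psi$ in Theorem~\ref{th1}, so that it is its coefficients that get assembled into the operational matrix.

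First I would establish the base case $p=2$. Applying Theorem~\ref{th1} with $\Psi(x)=\Xi(x)$, hence $m=n$ and $\psi_k=\xi_k$, equation~\eqref{eqqt1b} gives $\Xi^2(x)={\bf \xi}^{(n)}{\bf \mathcal{H}}_{n,n}{\bf \Theta}_{2\times n}(x)$ with ${\bf \mathcal{H}}_{n,n}=\sum_{k=0}^{n}\xi_k{\bf \tilde{H}}_{n,k}$ from~\eqref{eqqt1}. For $p=2$ the product $\prod_{j=1}^{p-1}$ collapses to the single factor ${\bf \mathcal{H}}_{1\times n,n}={\bf \mathcal{H}}_{n,n}$ and ${\bf \Theta}_{p\times n}={\bf \Theta}_{2\times n}$, so this reproduces~\eqref{gardedn} for $p=2$.

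For the inductive step I would assume $\Xi^p(x)={\bf \xi}^{(n)}\left(\prod_{j=1}^{p-1}{\bf \mathcal{H}}_{j\times n,n}\right){\bf \Theta}_{p\times n}(x)$ and abbreviate the accumulated row vector as ${\bf \eta}^{(p\times n)}:={\bf \xi}^{(n)}\prod_{j=1}^{p-1}{\bf \mathcal{H}}_{j\times n,n}$, so that $\Xi^p(x)={\bf \eta}^{(p\times n)}{\bf \Theta}_{p\times n}(x)$ is a polynomial of degree $p\times n$ in the given basis. Writing $\Xi^{p+1}(x)=\Xi^p(x)\,\Xi(x)$ and applying Theorem~\ref{th1} with left factor of degree $p\times n$ (coefficients ${\bf \eta}^{(p\times n)}$) and right factor $\Xi(x)$ of degree $n$ (coefficients $\xi_k$) gives $\Xi^{p+1}(x)={\bf \eta}^{(p\times n)}{\bf \mathcal{H}}_{p\times n,n}{\bf \Theta}_{(p+1)\times n}(x)$, where ${\bf \mathcal{H}}_{p\times n,n}=\sum_{k=0}^{n}\xi_k{\bf \tilde{H}}_{p\times n,k}$. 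Substituting the definition of ${\bf \eta}^{(p\times n)}$ and appending the new factor on the right yields $\prod_{j=1}^{p-1}{\bf \mathcal{H}}_{j\times n,n}\cdot{\bf \mathcal{H}}_{p\times n,n}=\prod_{j=1}^{p}{\bf \mathcal{H}}_{j\times n,n}$, which is exactly~\eqref{gardedn} with $p$ replaced by $p+1$, closing the induction.

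The only subtle point — and the place I would be most careful — is the bookkeeping of which factor supplies the operational matrix. In every iteration it is the original $\Xi(x)$ (coefficients $\xi_k$) that plays the role of $\Psi$, so all the operational matrices are built from the same $\xi$-coefficients; only their size grows, because the left factor $\Xi^p(x)$ has the increasing degree $p\times n$. This is precisely why the telescoping product $\prod_{j=1}^{p-1}{\bf \mathcal{H}}_{j\times n,n}$ appears and why the dimensions are compatible: by Lemma~\ref{lemOperationalMatrix} each ${\bf \mathcal{H}}_{j\times n,n}$ has size $(jn+1)\times\bigl((j+1)n+1\bigr)$, so consecutive factors chain correctly and, multiplied on the left by the $1\times(n+1)$ vector ${\bf \xi}^{(n)}$, produce a single row vector acting on ${\bf \Theta}_{p\times n}(x)$.
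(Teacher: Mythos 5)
Your proof is correct and follows essentially the same route as the paper's: induction on the exponent, with the base case $p=2$ given by Theorem~\ref{th1}, and the inductive step obtained by writing the next power as the accumulated power times $\Xi(x)$ and applying Theorem~\ref{th1} again, so that the original coefficients $\xi_k$ always supply the operational matrix ${\bf \mathcal{H}}_{p\times n,n}=\sum_{k=0}^{n}\xi_k{\bf \tilde{H}}_{p\times n,k}$ while only its size grows. Your explicit dimension bookkeeping, each ${\bf \mathcal{H}}_{j\times n,n}$ of size $(jn+1)\times\bigl((j+1)n+1\bigr)$, is a point the paper leaves implicit, but the argument is the same.
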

\begin{proof}
	The proof is by induction in $p$. When $p = 2$, we use the statement of Theorem~\ref{th1} to get

	\[
	\Xi^2(x) = {\bf \xi}^{(n)} \displaystyle{({\bf \mathcal{H}}_{n, n}) \Phi_{2n} (x)},
	\]

and similarly

\[
	\Xi^3(x) = \Xi(x)\Xi^2(x)= {\bf \xi}^{(n)} \Phi_{n} (x) {\bf \xi}^{(n)} \displaystyle{({\bf \mathcal{H}}_{n, n}) \Phi_{2n} (x)},
	\]
which can be written using Theorem~\ref{th1} as

\[
	\Xi^3(x) = {\bf \xi}^{(n)} \displaystyle{({\bf \mathcal{H}}_{n, n}{\bf \mathcal{H}}_{2\times n, n}) \Phi_{3n} (x)}.
	\]

	Now, the inductive hypothesis for $\displaystyle{\Xi^{p - 1}(x)}$ becomes
	\[
	\displaystyle{\Xi^{p}(x)}= \displaystyle{\Xi (x)} ~\displaystyle{\Xi^{p - 1}(x)} = {\bf \xi}^{(n)} {\bf \Phi}_n(x)  {\bf \xi}^{(n)} (\prod_{j=1}^{p -2}{\bf \mathcal{H}}_{(j\times n),n}){\bf \Phi}_{((p - 1) \times n)}(x).
	\]

	Using Theorem~\ref{th1} and the notation of $\displaystyle{{\bf \mathcal{H}}_{(j\times n),n}}$ we arrive at
	\[
	\displaystyle{\Xi^{p}(x) = {\bf \xi}^{(n)} (\prod_{j=1}^{p-1}{\bf \mathcal{H}}_{(j\times n),n}){\bf \Phi}_{((p - 1) \times n + n)}(x)} = {\bf \xi}^{(n)} (\prod_{j=1}^{p-1}{\bf \mathcal{H}}_{(j\times n),n}){\bf \Phi}_{(p \times n)}(x).
	\]
\end{proof}

%===================================================================================================================================================================
\section{Direct Multiplication of Polynomials in Degree-Graded Bases}\label{sec.Degree-Graded}
In this section, our aim is to determine the structure of the operational matrix ${\tilde{{\bf H}}}_{n,k}$  %in Lemma \ref{lemOperationalMatrix}
 for a fixed non-negative integer $k$,  $0\leq k\leq m$, in the case of degree-graded polynomial bases.  We also show that how the operational matrix ${\tilde{{\bf H}}}_{n-1,k}$ is embedded  in   ${\tilde{{\bf H}}}_{n,k}$.

Note that to make Theorem~\ref{th1} consistent for all polynomial bases, %and especially in Section~\ref{MBED},
 we use a slightly different notation for degree-graded polynomial bases in this section than we use elsewhere in this work. %, especially those that were brought up in Subsection~\ref{degdefi}.
 In particular, we denote the family of degree-graded polynomial basis of a maximum degree of $n$ by $\{\phi_{j, n}(x)\}_{j=0}^{n}$, where $\phi_{j, n}(x)$ is of degree $j$. The second index $n$ may seem redundant in $\phi_{j, n}(x)$, but it is important in what follows %in Section~\ref{MBED}
 to maintain the consistency and universality of the important results of Section \ref{mainthm} across all polynomial bases.
%Also, we take  ${\bf\Phi}_{n}(x)=  [\phi_{0, n}(x), \cdots, \phi_{n, n}(x)]^T$, the vector form of a degree-graded polynomial basis of degree $n$. Therefore,
 Under this notation,  in any degree-graded basis regardless of the maximum degree, the basis elements of the same degree are equal, i.e., $$\phi_{j, m}(x)= \phi_{j, n}(x);~~~~0\leq j\leq\min(m, n).$$

\subsection{Multiplication of Basis Elements}\label{MBED}
The following important lemma defines a matrix  operator that is used in determining the result of the multiplication of a basis element  of a degree-graded polynomial of a certain degree by the basis vector of a given degree. This can be viewed as a special case of Lemma~\ref{lemOperationalMatrix} for the degree-graded polynomial bases.

\begin{lemma}\label{lem1}
	For every non-negative integers $k$ and $n$, we have
	\begin{equation}\label{eq9}
		\phi_{k, m}(x){\bf \Phi}_n(x)={\bf \tilde{H}}_{n, k}{\bf \Phi}_{n+m}(x),\end{equation}
	such that  %${\bf\Phi}_{n+m}(x)=[\phi_{0, n+m}(x),\phi_{1, n+m}(x), \cdots,  \phi_{n+m, n+m}(x)]^T$, and
  \begin{equation}\label{Htilde}
	{\bf \tilde{H}}_{n, k}=\left[ \begin{array}{c; {2pt/2pt}c c}
	{\bf H}_{n, k}& {\bf 0}_{(n+1)\times (m-k)}
	\end{array} \right],\qquad \quad  k= 0, 1, \cdots, m,
	\end{equation}
	where ${\bf H}_{n,k}$ is a matrix of the size $(n+1)\times(n+k+1)$  with the following entries:
	\begin{small}
		\begin{equation}\label{Hmatrix}
			{\bf H}_{n,k}[i,j]=\begin{cases}
				1, ~~~~~~~~~~~~~~~~~~~~~~~~~~~~~~~~~~~i=1,~ j=k+1,\\
				0, ~~~~~~~~~~~~~~~~~~~~~~~~~~~~~~~~~~~ j \leq |k+1-i|  $ {\rm or} $ j\geq k+1+i,\\
				\frac{1}{\alpha_{i-2}}(	\alpha_{j-2}{\bf H}_{n,k}[i-1,j-1]+(\beta_{j-1}-\beta_{i-2}){\bf H}_{n,k}[i-1,j]+\gamma_{j}{\bf H}_{n,k}[i-1,j+1]-\gamma_{i-2}{\bf H}_{n,k}[i-2,j]), ~~{\rm  elsewhere.}
			\end{cases}
		\end{equation}
	\end{small}

	The entries ${\bf H}_{n,k}[i,j]$ are set to $0$, for any negative or zero index or when  $j >n+k+1$.
\end{lemma}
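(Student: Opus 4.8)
The plan is to lean on Lemma~\ref{lemOperationalMatrix}, which already guarantees that a \emph{unique} operational matrix satisfying~\eqref{eq9} exists; the only task left is to verify that the explicit entries prescribed in~\eqref{Hmatrix} are the correct ones. I would first reinterpret the entry ${\bf H}_{n,k}[i,j]$ as the coefficient of $\phi_{j-1}(x)$ in the expansion of the product $\phi_{k}(x)\phi_{i-1}(x)$ in the degree-graded basis (recall that in this section's notation $\phi_{k,m}(x)=\phi_k(x)$, independent of $m$). Row $i$ of~\eqref{eq9} is then precisely the statement that $\phi_k(x)\phi_{i-1}(x)=\sum_j {\bf H}_{n,k}[i,j]\,\phi_{j-1}(x)$, so it suffices to prove this identity for every $i=1,\dots,n+1$ by induction on the row index $i$, under the standing convention that any entry carrying a nonpositive index vanishes.

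For the base case $i=1$ I would use $\phi_{i-1}=\phi_0=1$, whence $\phi_k(x)\phi_0(x)=\phi_k(x)$ and the coefficient of $\phi_{j-1}$ equals $1$ exactly when $j=k+1$, matching the first line of~\eqref{Hmatrix}. For the inductive step I would solve the three-term recurrence~\eqref{eq.rec} for its leading term to write $\phi_{i-1}(x)=\frac{1}{\alpha_{i-2}}\bigl(x\,\phi_{i-2}(x)-\beta_{i-2}\phi_{i-2}(x)-\gamma_{i-2}\phi_{i-3}(x)\bigr)$, multiply through by $\phi_k(x)$, and substitute the inductive expansions of $\phi_k\phi_{i-2}$ (row $i-1$) and $\phi_k\phi_{i-3}$ (row $i-2$). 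The remaining manipulation is to expand each factor $x\,\phi_{j'-1}(x)$ occurring in $x\,\phi_k\phi_{i-2}$ by~\eqref{eq.rec} and to collect the total coefficient of $\phi_{j-1}(x)$; the three shifted contributions $\alpha_{j-2}{\bf H}_{n,k}[i-1,j-1]$, $\beta_{j-1}{\bf H}_{n,k}[i-1,j]$, $\gamma_{j}{\bf H}_{n,k}[i-1,j+1]$, together with the $-\beta_{i-2}{\bf H}_{n,k}[i-1,j]$ and $-\gamma_{i-2}{\bf H}_{n,k}[i-2,j]$ terms, reassemble exactly into the recursion in the third line of~\eqref{Hmatrix}. (For $i=2$ the row-$(i-2)$ reference is a zero row, consistent with the convention.)

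It then remains to justify the support pattern in the second line of~\eqref{Hmatrix}. The upper boundary $j\ge k+1+i$ is a pure degree count: $\phi_k\phi_{i-1}$ has degree $k+i-1$, so $\phi_{j-1}$ with $j-1>k+i-1$ cannot occur; this same bound fills the zero block ${\bf 0}_{(n+1)\times(m-k)}$ in~\eqref{Htilde} and validates writing the identity against ${\bf \Phi}_{n+m}(x)$. The lower boundary $j\le|k+1-i|$ is the genuinely nontrivial point, asserting that no basis element of degree below $|k-(i-1)|$ appears. I would establish it by a short companion induction on the smaller of the two degrees, showing $\phi_k(x)\phi_{\ell}(x)$ lies in the span of $\phi_{|k-\ell|},\dots,\phi_{k+\ell}$; using $\phi_k\phi_\ell=\phi_\ell\phi_k$ we may always reduce the \emph{smaller} degree via the recurrence, after which multiplication by $x$ shifts the span by at most one degree in each direction, so the lowest degree produced is exactly $|k-\ell|$.

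I expect the main obstacle to be this lower boundary. Tracking it directly through the entry recursion is awkward because the threshold $|k+1-i|$ carries an absolute value, and near that boundary the referenced entry ${\bf H}_{n,k}[i-1,j]$ is not forced to vanish by the inductive support hypothesis alone; the clean route is the separate span argument above, which sidesteps the index bookkeeping entirely and also transparently handles both regimes $k\ge i-1$ and $k<i-1$ through the symmetry of the product. The rest is careful but routine shifting of indices when collecting the coefficient of $\phi_{j-1}$.
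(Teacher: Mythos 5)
Your proof is correct, and its engine is the same as the paper's: the paper builds ${\bf H}_{n,k}$ row by row, multiplying the relation for $\phi_{i-2,n}(x)\phi_{k,m}(x)$ by $x$, applying the three-term recurrence \eqref{eq.rec} to both sides, and solving for $\phi_{i-1,n}(x)\phi_{k,m}(x)$ in terms of the two preceding rows --- which is exactly the algebra you perform by solving the recurrence for its leading term and substituting the expansions of rows $i-1$ and $i-2$. Where you genuinely add something is in rigor, in two places. First, the paper computes the rows for degrees $0$, $1$, $2$ explicitly and then says ``continuing this process,'' whereas you run an actual induction on the row index, anchored by the uniqueness guarantee of Lemma~\ref{lemOperationalMatrix}, so that verifying the prescribed entries suffices. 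Second, and more substantively, the paper never addresses why the declared zero pattern in the second line of \eqref{Hmatrix} is consistent with the coefficient recursion --- that is, why no basis element of degree below $\left|k-(i-1)\right|$ can occur in $\phi_k(x)\phi_{i-1}(x)$; your companion induction showing $\phi_k(x)\phi_{\ell}(x)\in\mathrm{span}\{\phi_{|k-\ell|},\dots,\phi_{k+\ell}\}$ settles precisely this point, and it is valid for an arbitrary three-term recurrence (not just orthogonal families, where one would instead argue via $\int\phi_k\phi_\ell\phi_j\,w=0$) because multiplication by $x$ shifts the support by at most one degree in each direction. So the route is the paper's route, but your write-up closes a gap that the paper's own proof leaves implicit.
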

\begin{proof}
	For a basis element $\phi_{k, m}(x)$ of a fixed degree
	$k$ of a degree-graded polynomial basis of the maximum degree $m$, we have
	\begin{equation}\label{eq1.5}
	\phi_{0, n}(x)\phi_{k, m}(x)=\phi_{k, n+m}(x).
	\end{equation}

This can be written as $$\phi_{0, n}(x)\phi_{k, m}(x)={\tilde {\bf h}}^T_{0, k} {\bf {\Phi}}_{n+m}(x),$$ where

\begin{equation*}
			{ \tilde{\bf h}}_{0,k}[i]=\begin{cases}
				1, ~~i=k+1,\\
				0, ~{\rm  elsewhere.}
			\end{cases}
\end{equation*}

	Multiplying \eqref{eq1.5} by $x$, we get
	\begin{equation*}\label{eq2.5}
	x\phi_{0, n}(x)\phi_{k, m}(x)=x\phi_{k, n+m}(x).
	\end{equation*}
	
	From the recurrence relation on each side, % of this equation,
	 we have
	\begin{equation*}\label{eq3.5}
	\Big(\alpha_{_{0}}\phi_{1, n}(x)+\beta_{_{0}}\phi_{0, n}(x)\Big)\phi_{k, m}(x)=\alpha_{_{k}}\phi_{k+1, n+m}(x)+\beta_{_{k}}\phi_{k, n+m}(x)+\gamma_{_{k}}\phi_{k-1, n+m}(x),
	\end{equation*}
	which  gives
	\begin{equation}\label{eq4.5}
	\qquad \phi_{1, n}(x)\phi_{k, m}(x)=\frac{1}{\alpha_{_{0}}}\Big(\alpha_{_{k}}\phi_{k+1, n+m}(x)+(\beta_{_{k}}-\beta_{_{0}})\phi_{k, n+m}(x)+\gamma_{_{k}}\phi_{k-1, n+m}(x)\Big).
	\end{equation}

This can also be written as $$\phi_{1, n}(x)\phi_{k, m}(x)={\tilde{\bf h}}^T_{1, k}{{\bf \Phi}}_{n+m}(x),$$ where

\begin{equation*}
			{\bf \tilde{h}}_{1,k}[i]=\begin{cases}
				\frac{\gamma_k}{\alpha_0}, ~~~~~~~i=k,\\
                \frac{(\beta_k-\beta_0)}{\alpha_0}, ~i=k+1,\\
                \frac{\alpha_k}{\alpha_0}, ~~~~~~~i=k+2,\\
				0, ~~~~~~~~~{\rm  elsewhere.}
			\end{cases}
\end{equation*}

	Similarly, multiplying~\eqref{eq4.5} by $x$ yields
	\begin{equation}\label{5.5}
	\begin{aligned}
	\phi_{2, n}(x)\phi_{k, m}(x)=\frac{1}{\alpha_{_{0}}\alpha_{_{1}}}&\Big(
	\alpha_{_{k+1}}\alpha_{_{k}}\phi_{{k+2}, n+m}(x)+\alpha_{_{k}}(\beta_{_{k+1}}+\beta_{_{k}}-\beta_{_{1}}-\beta_{_{0}})\phi_{{k+1}, n+m}(x)\\
	&+((\beta_{_{k}}-\beta_{_{0}})(\beta_{_{k}}-\beta_{_{1}})-\gamma_{_{1}}\alpha_{_{0}}+\gamma_{_{k}}\alpha_{_{k-1}}+\gamma_{_{k+1}}\alpha_{_{k}})\phi_{k, n+m}(x)\\
	&+\gamma_{_{k}}(\beta_{_{k}}+\beta_{_{k-1}}-\beta_{_{1}}-\beta_{_{0}})\phi_{k-1, n+m}(x)+\gamma_{_{k}}\gamma_{_{k-1}}\phi_{{k-2}, n+m}(x)\Big),
	\end{aligned}	\end{equation}
which, like the previous steps, can be written as $$\phi_{2, n}(x)\phi_{k, m}(x)={\tilde{\bf h}}^T_{2, k}{{\bf \Phi}}_{n+m}(x).$$
	
	Continuing this process, we can obtain $\phi_{3, n}(x)\phi_{k, m}(x),...,\phi_{n, n}(x)\phi_{k, m}(x)$ and write the results in a matrix-vector form.
Then~\eqref{eq9} is obtained, where $${\bf \tilde{H}}_{n, k}= \left[
    \begin{array}{ccc}
        {\bf {\tilde{h}}}^T_{0, k} \\ \hdashline[2pt/2pt]
        {\bf {\tilde{h}}}^T_{1, k} \\ \hdashline[2pt/2pt]
        \vdots \\ \hdashline[2pt/2pt]
        {\bf \tilde{h}}^T_{n, k}
    \end{array}
\right]$$ is given by~\eqref{Htilde}.
\end{proof}

At this point and once the operational matrices ${\bf \tilde{H}}_{n, k}$ are determined using the parameters of the three-term recurrence relation, we can compute direct multiplication of two degree-graded polynomials in a given basis by a basis vector of a given degree by  Theorem~\ref{th1}.
%\begin{corollary}\label{degree-mult}
%	Let $\Psi(x)=[\psi_0, \cdots, \psi_m] {\bf \Phi}_m(x)$ and $\Xi(x)=[\xi_0, \cdots, \xi_n] {\bf \Phi}_n(x)$ be two polynomials of degrees $m$ and $n$, respectively given in the same degree-graded basis. Then
%\begin{equation}\label{keydegree}
%\Xi(x)\Psi(x)=[\xi_0, \cdots, \xi_n]{\bf \mathcal{H}}_{m,n} {\bf \Phi}_{n+m}(x),
%\end{equation}
%where
%\begin{equation}\label{Hcal}
%{\bf \mathcal{H}}_{m,n}= \sum_{k= 0}^{m}{\psi_k{\bf \tilde{H}}_{n, k}}.
%\end{equation}
%\end{corollary}

%=======================================================
\subsection{More on the Structure of ${\bf H}_{n,k}$}
\label{SA}

In this subsection, some important features and advantages of the operational matrices ${\bf H}_{n,k}$  related to multiplication of degree-graded  polynomials basis are expressed. We  show  that there exist an embedding relation between matrices ${\bf H}_{n,k}$ and ${\bf H}_{n-1,k}$.
In order to gain some intuition,  Figure \ref{Fig-Hkn} displays the sparsity layout (non-zero structure) of ${\bf H}_{n,k}$ given by~\eqref{Hmatrix}  for any  $k \leq n$ and $k \geq n$, respectively. Besides, Table \ref{Tablembym} indicates the number of  nonzero entries of  ${\bf H}_{n,k}$ that need to be independently computed.

% can be summarized in the following proposition:

\begin{figure}[!h]
	\centering
	\subfigure[ $k \leq n$: The only nonzero entry of the first column appears in the $(k+1)$-st row.]{\includegraphics[width=.465\textwidth]{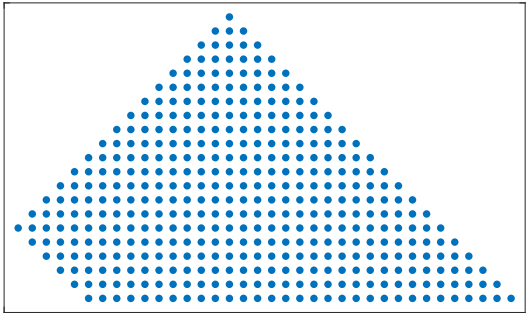} \label{Fig-Hkn1}}\hspace{.25cm}
	\subfigure[$k \geq n$: The first $(k-n)$ columns of the matrix are zero vectors.]{ \includegraphics[width=.512\textwidth]{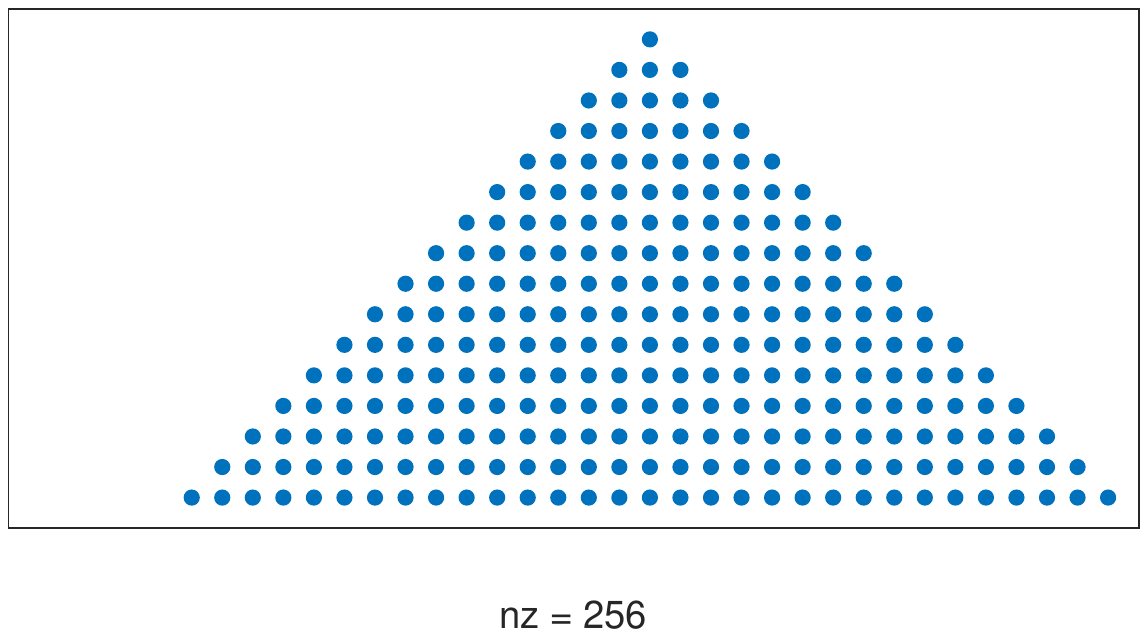}\label{Fig-Hkn2}}
	\caption{\small Non-zero structure of the matrices ${\bf {H}}_{n,k}$ for a general degree-graded polynomial  bases.}
	\label{Fig-Hkn}
\end{figure}

\begin{proposition}
	\label{p1}
	%For a given non-negative integer $n$, let $k$ be  any  integer with $1 \leq k \leq n$. Then for the matrices ${\bf H}_{n,k}$  in Lemma \ref{lem1},  ${\bf H}_{k,n}$ and its  submatrices,
	Let $n$ and $k$ be two non-negative integers. Then
	\begin{itemize}
		\item[(a)] the structure of ${\bf H}_{0,k}$ is as follows
		
		\begin{equation*}\label{eqht}
		{{\bf H}_{0, k}}=\left[ \begin{array}{c; {2pt/2pt}c c}
		{\bf 0}_{1\times k}& {\bf 1 }
		\end{array} \right] ~~~\text{and} ~~~ {\bf H}_{0,0} = \left [  \begin{array}{c} {\bf 1} \end{array} \right]
		\end{equation*}
		and  ${\bf H}_{k,0}$ is  the $(k+1) \times (k+1)$ identity matrix ${\bf I}_{k+1}$;
		\item[(b)]  the  matrix $~{\bf H}_{n-1,k}$  is a submatrix of  $~{\bf H}_{n,k}$, in the form of
		\begin{equation}\label{nestedHkn}
		{\bf H}_{n,k} = \left[ \begin{array}{c; {2pt/2pt}cc}
		\begin{array}{c} {\bf H}_{n-1,k}\\ \hdashline[2pt/2pt] {\bf H}_{n,k} [n+1,1:n+k] \end{array}& \begin{array}{c} {\bf 0}_{n\times 1}\\ \hdashline[2pt/2pt] {\bf H}_{n,k} [n+1,n+k+1] \end{array}
		\end{array} \right]
		\end{equation}
		using MATLAB colon notation.
		The last row  is the only part of the matrix
		${\bf H}_{n,k}$ that must be computed and has at most $2k+1$ nonzero entries.
	 The operational  cost for computing the sequence $\{ {\bf H}_{j,k} \}_{j=1}^{n}$ is the same as the one for ${\bf H}_{n,k}$;
	\item[(c)] the last rows of  ${\bf H}_{i,j}$ and ${\bf H}_{j,i}$, for $i \not = j$, are the same.  There is no need to separately calculate the nonzero entries of ${\bf H}_{i, j}$, when $j > i$, which leads to operational savings in computing the matrices ${\bf H}_{i,j}$.	

%If $k \leq n$, the number of nonzero entries of ${\bf H}_{n,k}$ is equal to $2nk+n-k^2+k+1$  and  the operational cost for computing the entries is $18nk+9n-9k^2+9k$, while for  $k > n$,  the number of nonzero entries is equal to $n^2+2n+1$ and the operational cost for computing such entries  is $9n^2 + 18n$.
%
%In summary,  the computational cost for constructing the matrix ${\bf H}_{n,k}$  involves a quadratic computational complexity $\mathcal{O}(n^2)$. However, we might be able to use the known linear algebra vector-matrix operations in order to improve the computational  complexity.
		
		\end{itemize}
\end{proposition}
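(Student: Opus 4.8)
The plan is to build everything on one structural observation: the recurrence \eqref{Hmatrix} defining the entries ${\bf H}_{n,k}[i,j]$ makes no reference to $n$. Its right-hand side involves only the indices $i,j$, the fixed second index $k$, and the coefficients $\alpha_\bullet,\beta_\bullet,\gamma_\bullet$; the parameter $n$ enters solely through the matrix dimensions $(n+1)\times(n+k+1)$. Conceptually this is forced by the degree-grading convention $\phi_{r,n}(x)=\phi_{r,m}(x)$ for $r\le\min(m,n)$ recorded at the start of Section~\ref{sec.Degree-Graded}: row $i$ of ${\bf H}_{n,k}$ is exactly the coefficient vector expanding $\phi_{i-1}(x)\phi_k(x)$ in the basis $\{\phi_0,\dots,\phi_{i-1+k}\}$, and this expansion does not see $n$. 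I would first record this $n$-independence as a short claim-within-the-proof, established by induction on the row index $i$: the base row $i=1$ is fixed by the first clause of \eqref{Hmatrix}, and the inductive step is immediate since the recurrence is the same formula, with the same out-of-range and zero conventions, in both ${\bf H}_{n-1,k}$ and ${\bf H}_{n,k}$.

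For part (a) I would simply instantiate the definition. The single row of ${\bf H}_{0,k}$ is row $i=1$, whose only nonzero entry sits at $j=k+1$ by the first clause, giving $[{\bf 0}_{1\times k}\;|\;{\bf 1}]$ and, for $k=0$, the scalar ${\bf 1}$. For ${\bf H}_{k,0}$ the second index is $0$, so each row $i$ expands $\phi_{i-1}(x)\cdot\phi_0(x)=\phi_{i-1}(x)$, placing a single $1$ on the diagonal and $0$ elsewhere; equivalently one checks from \eqref{Hmatrix} with $k=0$ that the sparsity bands collapse to force ${\bf I}_{k+1}$.

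Part (b) is where the $n$-independence does the work. Restricting ${\bf H}_{n,k}$ to its first $n$ rows and first $n+k$ columns reproduces exactly ${\bf H}_{n-1,k}$ by the observation above; the one point needing care is the boundary column. I would verify that the newly appended column $n+k+1$ vanishes in rows $1,\dots,n$ using the sparsity clause $j\ge k+1+i$ (which holds there since $n+k+1\ge k+1+i$ for $i\le n$), and, symmetrically, that evaluating the recurrence at the last column $j=n+k$ references the entry at column $n+k+1$ of the preceding row, which this same clause forces to $0$, so the edge computation agrees with the out-of-range zero used for ${\bf H}_{n-1,k}$. Reconciling these two boundary conventions is the only genuinely delicate step, and is where I would spend most of the care. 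The block form \eqref{nestedHkn} then follows and only row $n+1$ is new; counting the indices $j$ with $|k-n|<j<k+n+2$ gives $k+n+1-|k-n|=\min(2k+1,2n+1)\le 2k+1$ potential nonzeros. Finally, since the recursive evaluation of ${\bf H}_{n,k}$ generates rows $1,\dots,n+1$ in order, and these are precisely the last rows of ${\bf H}_{1,k},\dots,{\bf H}_{n,k}$, the whole family $\{{\bf H}_{j,k}\}_{j=1}^{n}$ is produced at no cost beyond computing ${\bf H}_{n,k}$.

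Part (c) I would deduce from commutativity of polynomial multiplication rather than from the recurrence. The last row of ${\bf H}_{i,j}$ is the coefficient vector of $\phi_i(x)\phi_j(x)$, and the last row of ${\bf H}_{j,i}$ is that of $\phi_j(x)\phi_i(x)$; both are length-$(i+j+1)$ vectors expressing the same product in the same basis $\{\phi_0,\dots,\phi_{i+j}\}$, so uniqueness of the expansion forces them to coincide. The operational-savings statement is then an immediate corollary: by part (b) the only new data in ${\bf H}_{i,j}$ with $j>i$ is its last row, and that row already appears as the last row of ${\bf H}_{j,i}$, whose larger first index is computed anyway, so nothing needs recomputing. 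The main obstacle overall is the bookkeeping in part (b)—matching the boundary-zero conventions of ${\bf H}_{n-1,k}$ and ${\bf H}_{n,k}$ so that the embedding is exact—whereas parts (a) and (c) are essentially direct.
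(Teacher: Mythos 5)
Your proof is correct. The paper states Proposition~\ref{p1} without any proof, so your argument fills a gap rather than paralleling an existing derivation; it follows the route implicit in the paper's construction, reducing every claim to the recurrence \eqref{Hmatrix} of Lemma~\ref{lem1} together with the interpretation of row $i$ of ${\bf H}_{n,k}$ as the coefficient vector of $\phi_{i-1}(x)\phi_k(x)$ in the basis $\{\phi_0,\dots,\phi_{i-1+k}\}$. The point you single out as delicate is indeed the only one requiring real care: the truncation rule (entries set to $0$ when $j>n+k+1$) is the sole $n$-dependent part of the definition, and your check that the sparsity clause $j\geq k+1+i$ forces those boundary entries to vanish in rows $1,\dots,n$ anyway is exactly what makes the embedding \eqref{nestedHkn} exact rather than approximate. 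Your count $k+n+1-|k-n|=\min(2k+1,2n+1)\leq 2k+1$ for the new row, and your derivation of part (c) from commutativity of multiplication plus uniqueness of expansion in a basis, are both correct and are the natural arguments the paper's surrounding discussion presupposes.
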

%\label{complexity of Multiplication}
%For simplicity of realization, let us suppose  that two polynomials $\Xi(x)$ and $\Psi(x)$ of degrees $n$ and $m$ ($n \geq m$) are given. We need to compute matrices $\psi_k \tilde{{\bf H}}_{n,k},~k=0,\cdots,m$ to construct the matrix $\mathcal{H}_{n,m}$ in \eqref{eqqt1}.  As mentioned above, the matrices $\tilde{{\bf H}}_{n,k}$ has $2nk+n-k^2+k+1$ nonzero entries. So, we need $\sum_{k=0}^{m} (2nk+n-k^2+k+1)=nm^2-\frac{1}{3} m^3+nm +\frac{1}{3} m + n$ multiplication for computing $\mathcal{H}_{n,m}$.
\begin{table}
	\begin{center} \footnotesize
		{
			\begin{tabular}{|l|ccccccc|}
				\hline
				&  $\phi_{0,n}$    & $\phi_{1,n}$    & $\phi_{2,n}$    &  $\phi_{3,n}$    &  $\cdots$ &  &      $\phi_{n,n}$ \\
				\hline
				${\bf \Phi}_0$   &  ${\bf H}_{0,0} ({\color{red} 0})$&  ${\bf H}_{0,1} ({\color{red} 0})$& ${\bf H}_{0,2} ({\color{red} 0})$& ${\bf H}_{0,3} ({\color{red} 0})$& $\dots$ &  & ${\bf H}_{0,n} ({\color{red} 0})$ \\
				${\bf \Phi}_1$   & ${\bf H}_{1,0} ({\color{red} 0})$&  ${\bf H}_{1,1} ({\color{red} 3})$& ${\bf H}_{1,2} ({\color{red} 0})$& ${\bf H}_{1,3} ({\color{red} 0})$&$\dots$ &  &${\bf H}_{1,n} ({\color{red} 0})$ \\
				${\bf \Phi}_2$  & ${\bf H}_{2,0} ({\color{red} 0})$&  ${\bf H}_{2,1} ({\color{red} 3})$& ${\bf H}_{2,2} ({\color{red} 5})$& ${\bf H}_{2,3} ({\color{red} 0})$&$\dots$ &  &    \\
				${\bf \Phi}_3$   & ${\bf H}_{3,0} ({\color{red} 0})$&  ${\bf H}_{3,1} ({\color{red} 3})$& ${\bf H}_{3,2} ({\color{red} 5})$& ${\bf H}_{3,3} ({\color{red} 7})$& $\ddots$ & & $\dots$  \\
				$\vdots$ & $\vdots$ & $\vdots$ &&& $\ddots$ & $\ddots$ & ${\bf H}_{n-2,n}({\color{red} 0})$ \\
				${\bf \Phi}_{n-1}$   & ${\bf H}_{n-1,0} ({\color{red} 0})$&  ${\bf H}_{n-1,1} ({\color{red} 3})$& ${\bf H}_{n-1,2} ({\color{red} 5})$& ${\bf H}_{n-1,3} ({\color{red} 7})$& $\ddots$ &  $\ddots$ & ${\bf H}_{n-1,n} ({\color{red} 0}) $  \\
				${\bf \Phi}_{n}$   & ${\bf H}_{n,0} ({\color{red} 0})$&  ${\bf H}_{n,1} ({\color{red} 3})$& ${\bf H}_{n,2} ({\color{red} 5})$& ${\bf H}_{n,3} ({\color{red} 7})$& $\dots$ &  $\dots$ & ${\bf H}_{n,n} ({\color{red} 2n+1}) $ \\
				\hline
			\end{tabular}
		}
	\end{center}
	\caption{\label{Tablembym}
		{\footnotesize The number of nonzero entries of each ${\bf H}_{i,j},~~ (i, j=0, \dots, n)$ that need to be independently computed is shown inside the parentheses in front of each matrix.}}
\end{table}

The existing embedding relations as well as the pattern in the number of nonzero entries of each ${\bf H}_{n,k}$ that must be independently generated, enable us to save in computational costs.
In Table \ref{Tablembym}, we determine the number of   new entries for construction  each matrix ${\bf H}_{n,k}$. For $n \geq k$, only $2k+1$ new entries  should be computed. Now, consider  the case $n < k$. Through parts (b), (c) of  Proposition \ref{p1}, the matrix ${\bf H}_{n,k}$ can be obtain by using the matrix ${\bf H}_{n-1,k}$ and the last row of the matrix ${\bf H}_{k,n}$.

Since through part (a) of Proposition \ref{p1}, the matrix ${\bf H}_{0,k}$ does not need any computation, by induction, the matrix ${\bf H}_{n,k}$ does not need any calculation. Actually, the matrix ${\bf H}_{n,k}$ does not need new calculation provided that the matrices ${\bf H}_{k,j}~j=0,\ldots,n$ have already been computed.
For example,  there is no need for new computations for the matrix ${\bf H}_{2,4}$ if the matrices ${\bf H}_{4,0}, {\bf H}_{4,1}$ and ${\bf H}_{4,2}$ are already given. Therefore, for the matrices that are located on the upper side of the diagonal in Table \ref{Tablembym}, we need no new computations provided that the matrices that are located on the lower side of the diagonal have already been given.

For further clarification, the following  example helps better  illustrates the structure of ${\bf H}_{n,k}$ and its applications in the direct multiplication of polynomials in degree-graded  polynomials bases.
\begin{Example}\label{ExCheli}
	\rm Let us consider the alternative orthogonal Legendre polynomial basis also known as the Chelyshkov polynomial basis of second  kind  (see e.g., \cite{p} ). It is a degree-graded polynomial basis on the interval [0, 1] with weight function $w(x) = x$, where it can be expressed in terms of Jacobi polynomials. These  polynomials are also related to the hypergeometric functions and orthogonal exponential polynomials. In recent years, Chelyshkov polynomials have found applications in various fields of approximation theory and numerical analysis, see for example \cite{q, r}.  According to \cite{s}, the recurrence coefficients for this polynomials are $\alpha_i =\frac{i + 2}{4i + 6},~ \beta_i =\frac{2(i+1)^2}{(2i + 3)(2i + 1)}$ and $\gamma_i = - \frac{i }{4 i + 2}$, for $i = 0 , ... , n.$
	
	To have a better understanding of the above results, we construct the sample matrices ${\bf H}_{n,k},~(n,k=0, 1, 2)$ for this polynomial basis. First, note that ${\bf H}_{0,0}={\bf I}_{1 \times 1}$, ${\bf H}_{1,  0}={\bf I}_{2 \times 2}$,  ${\bf H}_{2,  0}={\bf I}_{3 \times 3}$ and
	\begin{equation*}
	\begin{aligned}
	&~{\bf H}_{0,1}=\begin{bmatrix} 0  & 1 \end{bmatrix}, ~~~~~~~~~~~~~~~~~~~~~~~~~~~~~~~~~~~~~~~~~~~~~~~~~~~
	{\bf H}_{0,2}=\begin{bmatrix} 0  & 0 &1 \end{bmatrix},
	\\
	\\
	&~{\bf H}_{1,1}=\begin{bmatrix}
	0  & 1 & 0\\ \frac{1}{2}& \frac{2}{5} & \frac{9}{10}
	\end{bmatrix}
	=\left[ \begin{array}{c; {1.5pt/1.5pt}c}
	\begin{array}{c} {\bf H}_{0,1}\\ \hdashline[1.5pt/1.5pt] \begin{array}{c c}
	\frac{1}{2} & \frac{2}{5}
	\end{array} \end{array}& \begin{array}{c} {\bf 0}_{1\times 1}\\ \hdashline[1.5pt/1.5pt] \frac{9}{10} \end{array}
	\end{array} \right],
	~~~~~~
	{\bf H}_{1,2}=\begin{bmatrix}
	0 &0  &1 & 0\\ 0 & \frac{3}{5} & \frac{16}{35} & \frac{6}{7}
	\end{bmatrix}
	=	\left[ \begin{array}{c; {2pt/2pt}cc}
	\begin{array}{c} {\bf H}_{0,2}\\ \hdashline[2pt/2pt] \begin{array}{c c c}
	0 & \frac{3}{5} & \frac{16}{35}
	\end{array} \end{array}& \begin{array}{c} {\bf 0}_{1\times 1}\\ \hdashline[2pt/2pt] \frac{6}{7} \end{array}
	\end{array} \right],
	\\
	\\
	&~{\bf H}_{2,1}=
	\left[ \begin{array}{c; {2pt/2pt}cc}
	\begin{array}{c} {\bf H}_{1,1}\\ \hdashline[2pt/2pt] \begin{array}{c c c}
	0 & \frac{3}{5} & \frac{16}{35}
	\end{array} \end{array}& \begin{array}{c} {\bf 0}_{2\times 1}\\ \hdashline[2pt/2pt] \frac{6}{7} \end{array}
	\end{array} \right],
	~~~~~~~~~~~~~~~~~~~~~
	{\bf H}_{2,2}=
	\left[ \begin{array}{c; {2pt/2pt} c}
	\begin{array}{c} {\bf H}_{1,2}\\ \hdashline[2pt/2pt] \begin{array}{c c c c}
	\frac{1}{3} & \frac{32}{105} & \frac{24}{35}& \frac{32}{63}
	\end{array} \end{array}& \begin{array}{c} {\bf 0}_{2\times 1}\\ \hdashline[2pt/2pt] \frac{50}{63} \end{array}
	\end{array} \right].
	\end{aligned}
	\end{equation*}
	
	It is observed that the last rows of the matrices ${\bf H}_{1,2}$ and ${\bf H}_{2,1}$ are the same, as expected from part (c) of Proposition \ref{p1}.
%\begin{Remark}\label{remark11}
%	An important observation here is that the last $(n+1)$ columns of ${\bf \mathcal{H}}_{n,m}$ in~\eqref{Hcal} form a lower triangular matrix which we use later in Section~\ref{degreedivsection}. To construct  $\tilde{{\bf H}}_{m+n,0}, \tilde{{\bf H}}_{m+n,1}, \dots, \tilde{{\bf H}}_{m+n,n}$ in (\ref{Hcal}),
%	we can take advantage of the operational cost savings
%	explained in Table \ref{Tablembym}.
%\end{Remark}
%
%\begin{Remark} \label{remark2}
%	Note that~\eqref{keydegree} is symmetric with respect to the coefficients of $\Xi(x)$ and $\Psi(x)$, therefore we can rewrite the right hand side of the equation by switching their coefficients and changing the matrix-vector sizes accordingly.
%\end{Remark}
%{\color{magenta}
%	%\subsection{Ideally there is no redundant terms and computations}
%	%\begin{Remark} \label{s1}
%	One conclusion,  that seems to be common to all other members of degree-grade family of polynomials,  is that in the process of producing the entries of any  matrix  ${\bf H}_{m,n}$ (i.e., ${\bf H}_{n,k}$) and then when we use  them to complete  polynomial multiplication, no term is redundant and  there is no additional and unnecessary calculations. It also does not impose any simplification process after performing the operations suggested.
%}

In the next part of this example, we elaborate below on the matrix-vector representation  of  the multiplication of two polynomials $\Xi(x)$ and $ \Psi(x)$ of degree $1$ and $2$, respectively, in that basis.  Taking  $n=1$ and $m=2$, we derive from  \eqref{Htilde} that
		\[
	{\bf \tilde{H}}_{1,0} =
		\left[ \begin {array}{cccc} 1&0&0&0\\
		0&1&0&0 \end {array} \right], \qquad
	%\]
		%\[
		{\bf \tilde{H}}_{1,1} =
		\left[ \begin {array}{cccc}0&1&0&0\\
		\frac{1}{2}&\frac{2}{5}&\frac{9}{10}&0 \end {array} \right],\qquad
		%\]
		%and
		%\[
		{\bf \tilde{H}}_{1,2} =
		\left[ \begin {array}{cccc}0&0&1&0\\
		0&\frac{3}{5}&\frac{16}{35}&\frac{6}{7} \end {array} \right].
		\]
		
		Using  (\ref{eqqt1}),  we have
		\begin{equation*} \mathcal{H}_{1,2} =
		\psi_0 {\bf \tilde{H}}_{1,0} + \psi_1 {\bf \tilde{H}}_{1,1} + \psi_2 {\bf \tilde{H}}_{1,2}  =
		\left[ \begin {array}{cccc} \psi_0&\psi_1&\psi_2&0\\ \frac{1}{2} \psi_1&\psi_0 + \frac{2}{5} \psi_1 + \frac{3}{5} \psi_2 & \frac{9}{10}\psi_1 + \frac{16}{35} \psi_2 & \frac{6}{7} \psi_2 \end {array} \right],
		\end{equation*}
		
		and therefore
		\begin{equation*}
         \begin{split}
		\Xi(x) \Psi(x)=
		\left[ \begin {array}{cc}
		\xi_0& \xi_1   \end {array} \right] \mathcal{H}_{1,2} ~~~~~~~~~~~~~~~~~~~~~~~~~~~~~~~~~~~~~~~~~~~~~~~~~~~~~~~~~~~~~~~~~~~~~~~~~~~~~~~~~~
		\\
		= \left[ \begin {array}{cccc}  \xi_0 \psi_0 + \frac{1}{2} \xi_1 \psi_1&
		 \xi_0 \psi_1 +\xi_1 (\psi_0 + \frac{2}{5} \psi_1 +  \frac{3}{5} \psi_2) &\xi_0 \psi_2 + \xi_1 (\frac{9}{10} \psi_1 + \frac{16}{35} \psi_2)& \frac{6}{7} \xi_1 \psi_2 \end {array} \right].
     \end{split}
		\end{equation*}
	\end{Example}
%Example \ref{ExCheli2}  confirms that, avoiding  multiplication to zero  (or addition with zero entries), there is no redundant computation.

%\subsection{Some remarks on complexity and  amortized costs}
In general, the total complexity for constructing the operational matrix   $\mathcal{ H}_{n,m}$ in \eqref{eqqt1} and then multiplication of two polynomials of degrees $n$ and $m$ is $\mathcal{O}(nm^2)$. However, it is noteworthy that in practice for a fixed polynomial basis,  we can generate the operational matrices $\mathbf{H}_{n,k}, k=0,\cdots,m,$ only once, and use them several times. As such, they deserve to be saved and managed in a set of professional database.
Also, it should be noted that the mentioned complexity cost is the worst-case  and it can be significantly reduced on some practical cases. For example, when we need to multiply $i \geq n$ different  polynomials of degree $n$ into $j \geq m$ different  polynomials of degree $m$, the amortized cost, \cite{BookAlgorithm},  for constructing $\mathbf{H}_{n,k}, k=0,\cdots,m,$ becomes $\mathcal{O}(m)$ or $\mathcal{O}(n)$. Another example is when we have a fixed polynomial of degree $m$ to be multiplied into $i \geq n$ (resp. $i \geq n+m$)  different polynomials of degree $n$, the amortized cost
 %for constructing  the necessary $\mathcal{ H}_{n,m}$'s and
  for the sequence of $i$ considered  polynomial multiplications  becomes $\mathcal{O}(m^2)$ (resp. $\mathcal{O}(m)$). However, for  the monomial basis, we have the following  result  which shows that in this case, the matrix  $\mathcal{ H}_{n,m}$ is available for free.

\begin{proposition}
	\label{p11}
For the multiplication of two monomial polynomials, we can construct any ${\bf H}_{n,k}$ without any operational cost. In fact,  at this case  we have
\[
	\mathcal{ H}_{n,m} = \sum_{k=0}^{m} \psi_k {\bf \tilde{H}}_{n,k} =
		\left[ \begin {array}{cccccccc}
 \psi_0&\psi_1&\cdots &\psi_m& & & & \\
 & \psi_0&	\psi_1&\cdots&\psi_m& & & \\
  &  & \psi_0&\psi_1 &\cdots & \psi_m &  &  \\
  &          &        &\ddots  & \ddots& \ddots & \ddots & \\
  & &        &        &\psi_0  & \psi_1 &\cdots & \psi_m
  \end {array} \right].
		\]
\end{proposition}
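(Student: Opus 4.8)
The plan is to exploit the fact that for the standard (monomial) basis the three-term recurrence coefficients collapse to $\alpha_j=1$, $\beta_j=0$, $\gamma_j=0$, as recorded in Section~2.1, so that the recursive entries in \eqref{Hmatrix} degenerate into a pure index shift. First I would substitute these values into the ``elsewhere'' branch of \eqref{Hmatrix}: every term weighted by a $\beta$ or $\gamma$ vanishes and the prefactor $\frac{1}{\alpha_{i-2}}\alpha_{j-2}$ becomes $1$, leaving the single relation ${\bf H}_{n,k}[i,j]={\bf H}_{n,k}[i-1,j-1]$. Combined with the seed ${\bf H}_{n,k}[1,k+1]=1$ from the first branch, an immediate induction on $i$ then shows that ${\bf H}_{n,k}[i,j]=1$ exactly when $j=k+i$ and $0$ otherwise.

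Equivalently, and perhaps more transparently, I would verify this directly from \eqref{eq9}: since $\phi_{j}(x)=x^{j}$, the $i$-th entry of $\phi_{k,m}(x){\bf \Phi}_n(x)$ is $x^{k}\cdot x^{i-1}=x^{k+i-1}$, which is precisely the $(k+i)$-th component of ${\bf \Phi}_{n+m}(x)$. Hence ${\bf \tilde{H}}_{n,k}$ must be the $(n+1)\times(n+m+1)$ matrix carrying a single $1$ in position $(i,k+i)$ for each $i=1,\dots,n+1$, i.e. a shift matrix, and by uniqueness (Lemma~\ref{lemOperationalMatrix}) this is the operational matrix. The two derivations agree, with the second making clear why no arithmetic is required: the entries are known in closed form, with no recursion to evaluate.

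Finally I would assemble $\mathcal{H}_{n,m}=\sum_{k=0}^{m}\psi_k{\bf \tilde{H}}_{n,k}$ from \eqref{eqqt1}. Since ${\bf \tilde{H}}_{n,k}$ places $1$'s only on the $k$-th superdiagonal $\{(i,k+i)\}$, the weighted sum deposits $\psi_k$ at each $(i,k+i)$; reading this off row by row yields exactly the banded Toeplitz matrix displayed in the statement, whose $(i,j)$ entry is $\psi_{j-i}$ for $0\le j-i\le m$ and $0$ otherwise. The claim that $\mathcal{H}_{n,m}$ is ``available for free'' is then immediate, because the assembly consists only of copying the coefficients $\psi_0,\dots,\psi_m$ into fixed diagonal positions.

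There is no serious obstacle here; the only care needed is bookkeeping in the index conditions. Concretely, I would confirm that the target position $j=k+i$ always lies strictly inside the nonzero band, namely $|k+1-i|<k+i<k+1+i$ for $i\ge 1$, so that the shift entry is never suppressed by the forced-zero branch of \eqref{Hmatrix}, and that the degenerate recursion propagates the single seed without spawning spurious nonzeros. Once these off-by-one checks are settled, the structural identity follows directly.
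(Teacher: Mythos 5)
Your proposal is correct, and it follows essentially the route the paper intends: the paper states Proposition~\ref{p11} without proof, treating it as an immediate specialization of Lemma~\ref{lem1}, and your first argument (substituting $\alpha_j=1$, $\beta_j=0$, $\gamma_j=0$ into \eqref{Hmatrix} so the recursion collapses to ${\bf H}_{n,k}[i,j]={\bf H}_{n,k}[i-1,j-1]$ and propagates the single seed down the $k$-th superdiagonal) is exactly that specialization. Your alternative direct verification via $x^{k}\cdot x^{i-1}=x^{k+i-1}$ together with the uniqueness guaranteed by Lemma~\ref{lemOperationalMatrix}, and the band-index check showing the shift position is never suppressed, is sound and arguably makes the ``zero operational cost'' claim even more transparent.
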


Using  the operational savings stated in Proposition \ref{p11}, for polynomials given in the monomial basis, the only part of (\ref{eqqt1b}) which requires computations is the part
${\bf \xi}^{(n)}\mathcal{H}_{m,n}$.
As such, when $m \leq n$,
% need $(n+1)(m-n+1) + 2\sum_{j=1}^{n} j= nm+n +m+1$ multiplications and $(n)(m-n+1) + 2\sum_{j=1}^{n} (j-1)= nm-n +m+1$ additions. Totally speaking, when  $m \leq n$, for the multiplication of the monomial polynomial
$\mathcal{O}(nm)$  operations are required for the multiplication of two polynomials.
\section{Direct Multiplication of Polynomials in Non-Degree-Graded Bases}
This section is on finding the structure of the operational matrices ${\tilde{{\bf H}}}_{n,k}$ for a non-negative integer $k$, where $0\leq k\leq m$, for two major non-degree-graded polynomial bases, i.e.  the Bernstein and the Lagrange polynomial basis.

Based on  Theorem \ref{th1}, these matrices  can be applied to accomplish the task of polynomial multiplications in these two non-degree graded bases.  However,  introducing the ``lifting matrices'' for these two bases, we propose  methods for intra-basis polynomial multiplications  through those matrices. In fact, using lifting matrices is more practical than using the operational matrices  ${\tilde{{\bf H}}}_{n,k}$ for performing intra-basis polynomial multiplications in the Bernstein and Lagrange polynomial bases.

\subsection{Lifting Process}
%It is known that the elements of a non-degree-graded polynomial basis do not form three-term recurrence relations similar to~\eqref{eq.rec}. However, they have other important and interesting properties that enable us to link the basis elements of non-degree-graded polynomial bases of different degrees.

An important property of non-degree-graded polynomial bases is that one can write a given polynomial in a non-degree-graded polynomial basis of degree $n$ as a polynomial in the same basis of degree $m (> n)$. This seems trivial in degree-graded polynomial bases (that consists of simply adding higher degree basis elements with zero coefficients), but as is  shown in this section, it is quite essential in finding formulas for direct multiplication of polynomials in non-degree-graded polynomial bases. We refer to this property as ``lifting''.
% Sometimes and under certain circumstances, we can also reverse the lifting process and write a polynomial given in a non-degree-graded polynomial basis of a higher degree, $m$, as a polynomial in the same basis of a lower degree, $n (< m)$. This property is known  as ``dropping''.

%Note that the lifting process for degree-graded polynomial bases is trivial and consists of simply adding higher degree basis elements with zero coefficients, which we refrain to address this issue.
%In what follows we will pay special attention to obtain the lifting matrices of the Bernstein and Lagrange polynomial bases.
%The following auxiliary lemma, which is basically a summarization of~\cite{buse2008CAGD},  describes an important property of the Bernstein polynomial basis:
%\begin{Lemma}\label{bern1lem} {\rm (From \cite{buse2008CAGD})}
% If $ b_{j,n}(x) $ is the j-th Bernstein polynomial basis of a degree $n$, then for $ j=0, \dots, n $
%	\begin{equation*}\label{bern2}
%	b_{j,n}(x)=\left( \frac{j+1}{n+1}\right) b_{j+1,n+1}(x)+\left(\frac{n+1-j}{n+1}\right) b_{j,n+1}(x).
%	\end{equation*}
%\end{Lemma}
%\vspace{0.3cm}

First, according to Lemma \ref{bern1lem}, we can define an $(n+1)\times (n+2)$ ``lifting matrix'' ${\bf T}_{n, n+1}$  of  the Bernstein polynomial basis as
\begin{equation}\label{firstliftbern}
	{\bf T}_{n, n+1}[i,j]=\left\{
	\begin{array}{ll}
		\frac{n+2-i}{n+1}, & i=j\\
		\frac{i}{n+1}, & i= j-1\\
		0, & \text{elsewhere},
	\end{array}\right.
\end{equation}
for $i= 1, \cdots, n+1$ and $j=1, \cdots, n+2$, such that
\begin{equation}\label{matbern}
	{\bf b}_n(x)= {\bf T}_{n, n+1}{\bf b}_{n+ 1}(x).
\end{equation}

It is clear from~\eqref{matbern} that if we want to write a polynomial, $P(x)= {\bf c}^T{\bf b}_n(x)$, given in the Bernstein polynomial basis of degree $n$ in terms of a polynomial in the same basis of degree $m (> n)$, we have $P(x)= {\bf d}^T{\bf b}_m(x)$, where ${\bf d}$ is a vector of size $(m+1)$ given by ${\bf d}^T= {\bf T}_{n, m}{\bf c}^T$ with
\begin{equation}\label{omega1}
	{\bf T}_{n, m}= {\bf T}_{n, n+1}{\bf T}_{n+1, n+2}\cdots{\bf T}_{m-2, m-1}{\bf T}_{m-1, m},
\end{equation}
and ${\bf T}_{n, m}$ is the lifting matrix of size $(n+1)\times(m+1)$.
\\

Using the above results, we can follow an argument to find  the  structure of the lifting  matrix of the Bernstein polynomial basis:
\begin{lemma}\label{liftbernlem}
	The lifting matrix ${\bf T}_{n, m}$ of the Bernstein polynomial basis is an upper triangular matrix with the entries:
	\begin{equation}\label{generalbernlift}
		{\bf T}_{n, m}[i,j]=\left\{
		\begin{array}{ll}
			\frac{\binom{n}{i-1}\binom{m-n}{j-i}}{\binom{m}{j-1}}, & i\leq j\\
			0, & {\rm elsewhere}.
		\end{array}\right.
	\end{equation}
\end{lemma}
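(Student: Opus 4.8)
The plan is to bypass the matrix product \eqref{omega1} and instead compute the change-of-basis coefficients directly from the closed form \eqref{6}, then argue by uniqueness that the resulting matrix coincides with ${\bf T}_{n,m}$. The engine is the elementary identity $(x-a)+(b-x)=b-a$, which lets me write $1=\big(\tfrac{(x-a)+(b-x)}{b-a}\big)^{m-n}$ and multiply a degree-$n$ basis element by this factor without changing its value. This is cleaner than chaining the single-step relation of Lemma~\ref{bern1lem}, though I will reconcile the two at the end.

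First I would fix a row and expand. Writing the $(i-1)$-st degree-$n$ Bernstein polynomial in $0$-based index as $b_{i-1,n}(x)=\binom{n}{i-1}\frac{(x-a)^{i-1}(b-x)^{n-i+1}}{(b-a)^n}$, I multiply by the factor above and apply the binomial theorem to $\big((x-a)+(b-x)\big)^{m-n}$. After combining powers of $(b-a)$ this produces a single sum $\sum_{\ell=0}^{m-n}\binom{m-n}{\ell}(x-a)^{i-1+\ell}(b-x)^{m-i+1-\ell}$ scaled by $\binom{n}{i-1}/(b-a)^m$.

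Next I would re-index by setting the new $0$-based column index to $i-1+\ell$ and recognize each monomial $(x-a)^{\bullet}(b-x)^{\bullet}$ as a degree-$m$ Bernstein polynomial, up to the reciprocal binomial coefficient $\binom{m}{\bullet}^{-1}$ read off from \eqref{6}. Collecting coefficients gives the $0$-based elevation formula, and shifting both indices by one converts it to the stated entries $\binom{n}{i-1}\binom{m-n}{j-i}/\binom{m}{j-1}$. Upper-triangularity is then immediate: the summation index $\ell$ is nonnegative, so the new column index never drops below the row index, forcing ${\bf T}_{n,m}[i,j]=0$ for $j<i$ (equivalently $\binom{m-n}{j-i}=0$ whenever $j<i$).

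Finally I would tie this explicit matrix back to the product definition \eqref{omega1}. Since the degree-$m$ Bernstein polynomials are linearly independent, the matrix expressing ${\bf b}_n(x)$ in terms of ${\bf b}_m(x)$ is unique; iterating \eqref{matbern} shows that the product \eqref{omega1} performs exactly that expansion, hence it must equal the explicit matrix just computed. The main obstacle is purely clerical: keeping the $0$-based polynomial indices synchronized with the $1$-based matrix indices and the summation range so that the support pattern lands correctly. A self-contained alternative that avoids the partition-of-unity trick is induction on $m-n$, with base case \eqref{firstliftbern} and step ${\bf T}_{n,m}={\bf T}_{n,m-1}{\bf T}_{m-1,m}$; there the real work is collapsing the two-term convolution produced by the bidiagonal factor via a Chu--Vandermonde identity, which is the step I would expect to demand the most care.
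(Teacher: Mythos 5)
Your proof is correct, but it takes a genuinely different route from the paper. The paper proves the lemma by induction on $m$ using the product definition \eqref{omega1}: the base case is the bidiagonal matrix \eqref{firstliftbern}, and the inductive step multiplies ${\bf T}_{n,k}$ by ${\bf T}_{k,k+1}$ and collapses the resulting two-term sum (this is exactly the ``alternative'' you sketch at the end; the identity that closes that step is Pascal's rule $\binom{k-n}{j-i}+\binom{k-n}{j-i-1}=\binom{k+1-n}{j-i}$ after the ratios $\binom{k}{j-1}/\binom{k+1}{j-1}$ and $\binom{k}{j-2}/\binom{k+1}{j-1}$ absorb the bidiagonal weights --- i.e.\ the $\binom{1}{\cdot}$ case of the Chu--Vandermonde identity you name, so your label is accurate if heavier than needed). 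Your main argument instead derives the closed form in one shot: writing $1=\bigl(\tfrac{(x-a)+(b-x)}{b-a}\bigr)^{m-n}$, multiplying $b_{i-1,n}(x)$ by it, and applying the binomial theorem immediately exhibits $b_{i-1,n}$ as $\sum_{j\ge i}\frac{\binom{n}{i-1}\binom{m-n}{j-i}}{\binom{m}{j-1}}\,b_{j-1,m}$, with upper-triangularity forced by $\ell=j-i\ge 0$. The one step your route genuinely needs --- and you supply it --- is the uniqueness argument: since $\{b_{j,m}\}_{j=0}^{m}$ is a basis of polynomials of degree at most $m$, the expansion coefficients are unique, and iterating \eqref{matbern} shows the product \eqref{omega1} computes exactly that expansion, so the two matrices coincide. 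What each approach buys: the paper's induction stays entirely inside the matrix framework (no appeal to linear independence) but hides the combinatorial identity in ``a little computation''; your direct expansion makes the coefficients transparent and avoids induction altogether, at the cost of the (routine) uniqueness step linking the closed form back to the matrix product that defines ${\bf T}_{n,m}$.
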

\begin{proof}
	For $m= n+1$, using~\eqref{firstliftbern}, we can easily write
	$${\bf T}_{n, n+1}[i,j]=\left\{
	\begin{array}{ll}
	\frac{\binom{n}{i-1}\binom{1}{j-i}}{\binom{n+1}{j-1}}, & i\leq j\\
	0, & \text{elsewhere},
	\end{array}\right.$$ therefore,~\eqref{generalbernlift} holds for $m=n+1$. Note that in this case, the entries are nonzero only when $i=j$ or $i=j-1$. Now, we proceed by induction. We assume the validity of the lemma for $m= k (> n)$ and transit to  $m= k +1$.  A little computation including  matrix multiplications shows that
	$${\bf T}_{k, k+1}[i,j]={\bf T}_{n, k}~ {\bf T}_{k, k+1}=\left\{
	\begin{array}{ll}
	\frac{\binom{n}{i-1}\binom{k-n+1}{j-i}}{\binom{k+1}{j-1}}, & i\leq j\\
	0, & \text{elsewhere},
	\end{array}\right.$$
	i.e., the formula holds for $k+1$ and the proof is complete.
\end{proof}

\begin{Corollary}
	The entries of ${\bf T}_{n, m}$ satisfy the following relation for $i=1, \cdots, n+1$ and $j= 1,\cdots, m+1$:
	\begin{equation}\label{liftingbernmatrixrelation}
		{\bf T}_{n, m}[i, j]= {\bf T}_{n, m}[n-i+2, m-j+2].
	\end{equation}
\end{Corollary}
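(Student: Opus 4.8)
The plan is to read the entries directly off the closed form in Lemma~\ref{liftbernlem} and reduce the claimed symmetry to the elementary reflection identity $\binom{N}{r}=\binom{N}{N-r}$ for binomial coefficients. Writing $i'=n-i+2$ and $j'=m-j+2$ for the reflected indices, I would first record the three substitutions that the map $(i,j)\mapsto(i',j')$ induces on the three binomials appearing in~\eqref{generalbernlift}: the top factor obeys $\binom{n}{i'-1}=\binom{n}{n-i+1}=\binom{n}{i-1}$, the denominator obeys $\binom{m}{j'-1}=\binom{m}{m-j+1}=\binom{m}{j-1}$, and the middle factor, whose argument becomes $j'-i'=(m-n)-(j-i)$, obeys $\binom{m-n}{j'-i'}=\binom{m-n}{(m-n)-(j-i)}=\binom{m-n}{j-i}$. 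Multiplying these together shows that the generic expression $\binom{n}{i-1}\binom{m-n}{j-i}/\binom{m}{j-1}$ is literally invariant under the reflection, which gives the corollary on the support of the matrix.

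The second step is to check that the reflection respects the branch structure of~\eqref{generalbernlift}, so that the two sides are simultaneously zero when they are not given by the generic formula. Here the key computation is the one already used above: since $j'-i'=(m-n)-(j-i)$, the quantity $j-i$ lies in the admissible range $0\le j-i\le m-n$ exactly when $j'-i'$ does, because $r\mapsto (m-n)-r$ is an involution of the set $\{0,1,\dots,m-n\}$. Consequently the reflection maps the (upper-triangular) nonzero pattern of ${\bf T}_{n,m}$ onto itself and the complementary zero pattern onto itself, so the equality ${\bf T}_{n,m}[i,j]={\bf T}_{n,m}[i',j']$ holds trivially (as $0=0$) off the support and by the invariance above on the support. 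I would also note in passing that the index ranges $1\le i\le n+1$ and $1\le j\le m+1$ are preserved, since $i\mapsto n-i+2$ and $j\mapsto m-j+2$ are involutions of $\{1,\dots,n+1\}$ and $\{1,\dots,m+1\}$ respectively.

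The only point requiring any care --- and what I would flag as the main, albeit mild, obstacle --- is precisely this bookkeeping at the boundary: one must verify that the convention $\binom{m-n}{j-i}=0$ for $j-i<0$ or $j-i>m-n$ is consistent with the explicit branch in~\eqref{generalbernlift}, and that the reflection does not move an entry from the nonzero triangle into the zero region. Once the involution identity $j'-i'=(m-n)-(j-i)$ is in hand, this is immediate, so no genuine difficulty remains; the corollary is essentially a repackaging of the three binomial symmetries together with the self-duality of the difference index $j-i$.
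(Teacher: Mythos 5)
Your proof is correct and follows exactly the route the paper intends: the corollary is stated without proof as an immediate consequence of Lemma~\ref{liftbernlem}, and your argument---applying the reflection identity $\binom{N}{r}=\binom{N}{N-r}$ to each of the three binomials in \eqref{generalbernlift}, together with the observation that $j'-i'=(m-n)-(j-i)$ makes the reflection an involution of the nonzero band so that off-support entries match as $0=0$---is precisely the omitted verification. Your attention to the boundary convention $\binom{m-n}{j-i}=0$ for $j-i<0$ or $j-i>m-n$ is the only point of care, and you handle it correctly.
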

%The lifting matrices in the Lagrange polynomial basis can also be  obtained  similarly  through  the following  property from \cite{berrut2004SIAM}:
%\begin{Lemma}\label{lag1lem}
%	 If we eliminate a node, say $\tau_k$, of a Lagrange polynomial basis of degree $n$ with nodes $\tau_i$, we have
%	$$ 	L_{i,n-1}(x)=\frac{{\ell}_{n-1}(x)w_{n-1,i}}{x-\tau_i},\qquad i=0,1,\dots,n;\quad i \neq k $$
%	where $ \ell_{n-1}(x)=\frac{\ell_n(x)}{x-\tau_k} $ and $ w_{n-1,i}= w_{n, i}(\tau_i-\tau_k) $, then
%	\begin{equation*}\label{lag1}
%	L_{i,n-1}(x)=-\frac{w_{n,i}}{w_{n,k}}L_{k,n}(x)+ L_{i,n}(x).
%	\end{equation*}
%\end{Lemma}
We now look at the lifting matrix for the Lagrange polynomial basis. Due to the important property of the Lagrange polynomials given in  Lemma \ref{lag1lem}, the entries of an $n\times(n+ 1)$ lifting matrix ${\bf R}_{n-1, n}$  can be defined consequently:
\begin{equation}\label{firstliftlag}
	{\bf R}_{n-1, n}[i,j]=\left\{
	\begin{array}{ll}
		1, & i=j\ne k\\
		-\frac{w_{n,i-1}}{w_{n,k}}, & j= k+1\\
		0, & \text{elsewhere},
	\end{array}\right.
\end{equation}
for $i= 1, \cdots, n$, and $j=1,  \cdots, n+1$, such that
\begin{equation}\label{matlag}
	{\bf L}_{n-1}(x)= {\bf R}_{n-1, n}{\bf L}_{n}(x).
\end{equation}

\begin{Remark}
	Without loss of generality and to make things easier to implement and follow, from this point on, we assume that the added (or eliminated) node is the last one (i.e., $k= n$). Obviously, we can always rearrange the nodes as we wish to make that happen.
\end{Remark}

Now in view of \eqref{matlag},  if we want to write a polynomial $P(x)= {\bf p}^T{\bf L}_n(x)$, given in the Lagrange polynomial basis of degree $n$ as a polynomial in the Lagrange polynomial basis of degree $m (> n)$, we have $P(x)= {\bf q}^T{\bf L}_m(x)$, where ${\bf q}$ is a vector of size $(m+1)$ given by ${\bf q}^T= {\bf R}_{n, m}{\bf c}^T$ with
\begin{equation*}\label{omega2}
	{\bf R}_{n, m}= {\bf R}_{n, n+1}{\bf R}_{n+1, n+2}\cdots{\bf R}_{m-2, m-1}{\bf R}_{m-1, m},
\end{equation*}
and ${\bf R}_{n, m}$ is the lifting matrix of size $(n+1)\times(m+1)$. Similarly, the lifting matrix, ${\bf R}_{n, m}$,  can be constructed as follows:
\begin{lemma}\label{liftlaglem}
	The lifting matrix ${\bf R}_{n, m}$ of the  Lagrange polynomial basis is given by
	\begin{equation}\label{generallaglift}
		{\bf R}_{n, m}=\left[ \begin{array}{c; {2pt/2pt}cc}
			
			{\bf I}_{n+1}& {\bf K}
		\end{array} \right],
	\end{equation}
	where ${\bf I}_{n+1}$ is the identity matrix of size $n+1$ and ${\bf K}$ is an $(n+1)\times (m-n)$ matrix as:
	\begin{equation*}\label{mainlaglift}
		\displaystyle{{\bf K}[i, j]= -{w_{j+n, i-1}\over w_{j+ n, j+ n}}- {1\over w_{j+ n, j+ n}}\sum_{r=1}^{j-1}{w_{j+n, j+n-r}{\bf K}[i, j-r]}},
	\end{equation*}
	for $i= 1, \cdots, n+1$ and $j= 1, \cdots, m-n$.
\end{lemma}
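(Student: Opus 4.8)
The plan is to mirror the inductive argument used for the Bernstein lifting matrix in Lemma~\ref{liftbernlem}, running the induction over the target degree $m$ and exploiting the factorization of $\mathbf{R}_{n,m}$ into single-step lifting matrices. Throughout I adopt the convention of the preceding Remark that the appended node is always the last one, so that each single-step matrix $\mathbf{R}_{\ell,\ell+1}$ obtained from~\eqref{firstliftlag} takes the block form $[\,\mathbf{I}_{\ell+1}\mid \mathbf{v}_{\ell+1}\,]$, where the leading $(\ell+1)\times(\ell+1)$ block is the identity and the single trailing column has entries $\mathbf{v}_{\ell+1}[i] = -w_{\ell+1,i-1}/w_{\ell+1,\ell+1}$. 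Recording this block shape once, at the level of one lifting step, is what makes the recursion for $\mathbf{K}$ fall out cleanly.

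For the base case $m=n+1$, the claimed form~\eqref{generallaglift} reduces to $\mathbf{R}_{n,n+1}=[\,\mathbf{I}_{n+1}\mid \mathbf{K}\,]$ with $\mathbf{K}$ a single column ($j=1$). The stated recursion at $j=1$ has an empty sum, so it collapses to $\mathbf{K}[i,1]=-w_{n+1,i-1}/w_{n+1,n+1}$, which is exactly $\mathbf{v}_{n+1}$, matching the single-step matrix directly. The inductive step assumes the block form for $\mathbf{R}_{n,m}=[\,\mathbf{I}_{n+1}\mid \mathbf{K}^{(m)}\,]$ and transits to $m+1$ through $\mathbf{R}_{n,m+1}=\mathbf{R}_{n,m}\,\mathbf{R}_{m,m+1}$, i.e. the product of an $(n+1)\times(m+1)$ matrix with an $(m+1)\times(m+2)$ matrix. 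Since $\mathbf{R}_{m,m+1}=[\,\mathbf{I}_{m+1}\mid \mathbf{v}_{m+1}\,]$, the first $m+1$ columns of the product coincide with $\mathbf{R}_{n,m}$; hence the leading identity block persists and the first $m-n$ columns of the new block $\mathbf{K}^{(m+1)}$ equal those of $\mathbf{K}^{(m)}$. Only the final column requires work, and it equals $\mathbf{R}_{n,m}\,\mathbf{v}_{m+1}$.

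The crux of the argument, and the step I expect to be most delicate, is verifying that this final column $\mathbf{R}_{n,m}\mathbf{v}_{m+1}$ agrees with the value $\mathbf{K}[i,\,m+1-n]$ predicted by the recursion in the statement. Writing out the $i$-th entry of $\mathbf{R}_{n,m}\mathbf{v}_{m+1}$ and splitting the sum over the identity block ($\ell\le n+1$) and the $\mathbf{K}^{(m)}$ block ($\ell\ge n+2$) produces the leading term $-w_{m+1,i-1}/w_{m+1,m+1}$ together with a convolution-type sum $-\tfrac{1}{w_{m+1,m+1}}\sum_{s=1}^{m-n}w_{m+1,n+s}\,\mathbf{K}^{(m)}[i,s]$. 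To match the recursion I would reindex its sum by setting $s=m+1-n-r$, which converts $w_{m+1,m+1-r}$ into $w_{m+1,n+s}$ and $\mathbf{K}[i,m+1-n-r]$ into $\mathbf{K}[i,s]$; after this substitution the two expressions agree term by term, and the induction closes. The main bookkeeping hazard is precisely these index shifts among the node labels $0,\ldots,m+1$, the matrix indices $1,\ldots,m+2$, and the column offset $j=\ell-n-1$, so I would fix these identifications explicitly before collapsing the sums rather than after.
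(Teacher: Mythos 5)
Your proposal is correct and takes essentially the same route as the paper: the paper's own proof is just the one-line remark that the result follows ``by induction,'' in a process quite similar to the proof of Lemma~\ref{liftbernlem}, which is precisely the induction on the target degree via the factorization $\mathbf{R}_{n,m+1}=\mathbf{R}_{n,m}\mathbf{R}_{m,m+1}$ that you carry out. Your added details --- the base case $\mathbf{K}[i,1]=-w_{n+1,i-1}/w_{n+1,n+1}$, the persistence of the identity block and of the earlier columns of $\mathbf{K}$, and the reindexing $s=m+1-n-r$ matching the new column $\mathbf{R}_{n,m}\mathbf{v}_{m+1}$ to the stated recursion --- all check out against the definitions in \eqref{firstliftlag} and \eqref{generallaglift}.
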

\begin{proof}
	The proof is by induction and the process is quite similar to the proof of Lemma~\ref{liftbernlem}.
\end{proof}

We are now ready to  derive the formulas for direct multiplications of the mentioned non-degree-graded polynomial bases:
%or the two important  non-degree-graded polynomial bases. %We start with the multiplication formula in the Bernstein polynomial basis.
\subsection{Bernstein  Basis Multiplication}\label{bernmultp}
Let ${\bf b}_{k, m}(x)$  and  ${\bf b}_{j, n}(x)$ be  the $k$-th and  $j$-th basis elements of the Bernstein polynomial bases of degrees $m$ and $n$, respectively.  Then it is fairly straightforward to observe that
\begin{equation*}
	\displaystyle{{\bf b}_{k, m}(x){\bf b}_{j, n}(x)= {\binom{m}{k}\binom{n}{j}\over\binom{m+n}{k+j}}{\bf b}_{k+j, m+n}(x)},
\end{equation*}
where ${\bf b}_{k+j, m+n}(x)$ is the $(k+j)$-th basis element of the Bernstein polynomial basis of degree $m+n$. In other words, one can write $$\displaystyle{{\bf b}_{k, m}(x){\bf b}_{j, n}(x)= {\bf \tilde{h}}^T_{j,k}{\bf b}_{m+n}(x)},$$ where vector ${\bf b}_{m+n}(x)= \mat{cccc} b_{0, n+m}(x)& b_{1, n+m}(x)& \cdots & b_{n+m, n+m}(x)\rix^T$ is the Bernstein polynomial basis of degree $m+n$, and
%\begin{equation}\label{bernmat2}
%
%			\displaystyle{ {\bf \tilde{h}}_{j,k}[i]=\begin{cases}
%				{\binom{m}{k}\binom{n}{j}\over\binom{m+n}{k+j}}, ~~~~~~~~~~~~~~~~~~~~~~~~~~~~~~~~~~i=k+j+1,\\
%				0, ~~~~~~~~~~~~~~~~~~~~~~~~~~~~~~~~~{\rm  elsewhere.}
%			\end{cases} }
%\end{equation}
\begin{equation}\label{bernmat2}
	{\bf \tilde{h}}_{j,k}[i]= \left\{
	\begin{array}{ll}
		\frac{\binom{m}{k}\binom{n}{j}}{\binom{m+n}{k+j}}, & i= k+ j+ 1, \\
		0, & \text{elsewhere.}
	\end{array}
	\right.
\end{equation}

We can now extend this result to ${\bf b}_{k, m}(x){\bf b}_{n}(x)$, %where vector ${\bf b}_{n}(x)= \mat{cccc}{\bf b}_{0, n}(x)&{\bf b}_{1, n}(x)& \cdots &{\bf b}_{n, n}(x)\rix^T$ is the Bernstein polynomial basis of degree $n$,
and state the following lemma whose proof is fairly straightforward:

\begin{lemma}\label{bernelementmul2}
	In the Bernstein polynomial basis, we have
	\begin{equation*}
{\bf b}_{k, m}(x){\bf b}_{n}(x)= {\bf {\tilde{H}}}_{n, k}{\bf b}_{m+n}(x), \qquad   k= 0, \cdots, m
\end{equation*}
 where
	\begin{equation*}
		{\bf \tilde{H}}_{n, k}= \left[
		\begin{array}{ccc}
			{\bf {\tilde{h}}}^T_{0, k} \\ \hdashline[2pt/2pt]
			{\bf {\tilde{h}}}^T_{1, k} \\ \hdashline[2pt/2pt]
			\vdots \\ \hdashline[2pt/2pt]
			{\bf \tilde{h}}^T_{n, k}
		\end{array}
		\right]
	\end{equation*}
	and ${\bf \tilde{h}}_{j,k}$ for $j= 0, \cdots, n$ is given by~\eqref{bernmat2}.
\end{lemma}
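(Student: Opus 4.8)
The plan is to build the matrix identity of Lemma~\ref{bernelementmul2} directly from the single-element product rule that was just established, namely
\begin{equation*}
{\bf b}_{k,m}(x)\,{\bf b}_{j,n}(x)=\frac{\binom{m}{k}\binom{n}{j}}{\binom{m+n}{k+j}}\,{\bf b}_{k+j,m+n}(x),
\end{equation*}
which in vector form reads ${\bf b}_{k,m}(x)\,{\bf b}_{j,n}(x)={\bf \tilde{h}}^T_{j,k}\,{\bf b}_{m+n}(x)$ with ${\bf \tilde{h}}_{j,k}$ given by~\eqref{bernmat2}. First I would recall that the statement ${\bf b}_{k,m}(x)\,{\bf b}_n(x)={\bf \tilde{H}}_{n,k}{\bf b}_{m+n}(x)$ is nothing more than stacking these scalar identities across the $n+1$ rows indexed by $j=0,\dots,n$, since ${\bf b}_n(x)=[\,b_{0,n}(x)\ \cdots\ b_{n,n}(x)\,]^T$.

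The key step is simply to observe that the $j$-th entry of the column vector ${\bf b}_{k,m}(x)\,{\bf b}_n(x)$ is exactly the product ${\bf b}_{k,m}(x)\,{\bf b}_{j,n}(x)$, and that we have already expressed this product as ${\bf \tilde{h}}^T_{j,k}\,{\bf b}_{m+n}(x)$. Writing these $n+1$ row identities one beneath the other gives
\begin{equation*}
{\bf b}_{k,m}(x)\,{\bf b}_n(x)=
\left[
\begin{array}{c}
{\bf \tilde{h}}^T_{0,k}\,{\bf b}_{m+n}(x)\\
\vdots\\
{\bf \tilde{h}}^T_{n,k}\,{\bf b}_{m+n}(x)
\end{array}
\right]
=
\left[
\begin{array}{c}
{\bf \tilde{h}}^T_{0,k}\\
\vdots\\
{\bf \tilde{h}}^T_{n,k}
\end{array}
\right]
{\bf b}_{m+n}(x)
={\bf \tilde{H}}_{n,k}\,{\bf b}_{m+n}(x),
\end{equation*}
where the second equality just factors the common vector ${\bf b}_{m+n}(x)$ out of each row. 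This is precisely the claimed block structure of ${\bf \tilde{H}}_{n,k}$, with rows ${\bf \tilde{h}}^T_{j,k}$ from~\eqref{bernmat2}.

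There is essentially no obstacle here, which matches the paper's own description that the proof is ``fairly straightforward.'' The only point worth a sentence of care is verifying that the index $k+j+1$ in~\eqref{bernmat2} stays within the range $1,\dots,n+m+1$, so that each ${\bf \tilde{h}}_{j,k}$ is a well-defined vector of length $n+m+1$ with a single nonzero entry; since $0\le k\le m$ and $0\le j\le n$ we have $1\le k+j+1\le n+m+1$, and the nonzero entries sit on a shifted diagonal giving ${\bf \tilde{H}}_{n,k}$ its banded shape. I would close by noting that the same stacking argument is structurally identical to the derivation of ${\bf \tilde{H}}_{n,k}$ in Lemma~\ref{lem1} for degree-graded bases, so the only real content is the single-element product rule, which was verified before the lemma statement.
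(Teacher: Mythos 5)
Your proof is correct and is exactly the argument the paper has in mind: the paper omits a formal proof, calling it ``fairly straightforward,'' and the intended content is precisely your row-by-row stacking of the scalar identities ${\bf b}_{k,m}(x){\bf b}_{j,n}(x)={\bf \tilde{h}}^T_{j,k}{\bf b}_{m+n}(x)$ from~\eqref{bernmat2}. Your added index check that $1\le k+j+1\le n+m+1$ is a harmless (and sensible) extra verification, not a deviation.
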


The above matrix ${\bf {\tilde{H}}}_{n, k}$ can now be used in Theorem~\ref{th1} to find ${\bf \mathcal{H}}_{n,m}$ and from there calculate intra-basis polynomial multiplications in the Bernstein polynomial basis. However, we derive our formula based on the lifting matrix for the Bernstein polynomial basis given by~\eqref{generalbernlift}.

The following lemma is an important result from Lemma~\ref{liftbernlem} that is used in devising a polynomial multiplication formula in the Bernstein polynomial basis:
\begin{lemma}\label{bernelementmul}
	The multiplication of ${\bf b}_{i, m}(x)$ by ${\bf b}_{j, n}(x)$ for $i=0, 1, \cdots, m$ and $j= 0, 1, \cdots, n$ can be written as
	\begin{equation*}\label{bernelementmul}
		\displaystyle{{\bf b}_{i, m}(x){\bf b}_{j, n}(x)= {\bf T}_{m, m+n}[i+1, i+ j+ 1]{\bf b}_{i+j, m+n}(x)},
	\end{equation*}
	where ${\bf T}_{m, m+n}$ is the lifting matrix given by~\eqref{generalbernlift}.
\end{lemma}
\begin{proof}
	Suppose that $$\Xi(x)= \mat{ccccc}\xi_0& \cdots &\xi_m\rix{\bf b}_{m}(x)= {\bf \xi}^{(m)}{\bf b}_{m}(x) ,$$ and $$\Psi(x)= \mat{ccccc}\psi_0& \cdots &\psi_n\rix{\bf b}_{n}(x)= {\bf \psi}^{(n)}{\bf b}_{n}(x),$$ are two polynomials given in the Bernstein polynomial basis of degrees $m$ and $n$ over $[a, b]$, respectively. Using~\eqref{omega1}, the lifting matrix for $\Xi(x)$ is ${\bf T}_{m, m+n}$ and the lifting matrix for $\Psi(x)$ is ${\bf T}_{n, m+n}$.
\end{proof}

We are now ready to state the following Theorem on the multiplication of two polynomials given in the Bernstein polynomial basis using Lemma~\ref{liftbernlem} and ~\eqref{generalbernlift}:
\begin{Theorem}\label{bernmullem}
	For the Bernstein polynomials $\Xi(x)$ and $\Psi(x)$ given above,
	\begin{equation}
		\Xi(x)\Psi(x)= {\bf c}{\bf b}_{m+n}(x),
	\end{equation}
	where ${\bf c}$ is  a size $m+n+1$ row vector, can be found either in the form of
\begin{equation*}
{\bf c}= {\bf \xi}^{(m)}{\bf \Gamma}_{\psi, m+n},
\end{equation*}
with 		\begin{equation}\label{gammapsi}{\bf \Gamma}_{\psi, m+n}[i, j]= \psi_{j-i}{\bf T}_{m, m+n}[i, j], \end{equation}
	
	and  ${\bf T}_{m, m+n}$ is obtained from~\eqref{generalbernlift} where $\psi_{j-i}=0,$ for  $j< i$.

Correspondingly,
$${\bf c}= {\bf \psi}^{(n)}{\bf \Gamma}_{\xi, m+n},$$ where
	\begin{equation}\label{gammaxi}
		{\bf \Gamma}_{\xi, m+n}[i, j]= \xi_{j-i}{\bf T}_{n, m+n}[i, j],
	\end{equation}
	with ${\bf T}_{n, m+n}$ from~\eqref{generalbernlift} and $\xi_{j-i}=0,$ wherever $j< i$.
\end{Theorem}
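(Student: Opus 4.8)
The plan is to expand the product into a double sum over pairs of basis elements, reduce each elementary product by Lemma~\ref{bernelementmul}, and then collect terms by the degree index of the resulting Bernstein basis of degree $m+n$; reading off each coefficient as a row-vector--matrix entry will yield the two claimed forms.

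First I would write
\[
\Xi(x)\Psi(x)=\Big(\sum_{i=0}^{m}\xi_i\,{\bf b}_{i,m}(x)\Big)\Big(\sum_{j=0}^{n}\psi_j\,{\bf b}_{j,n}(x)\Big)=\sum_{i=0}^{m}\sum_{j=0}^{n}\xi_i\psi_j\,{\bf b}_{i,m}(x){\bf b}_{j,n}(x).
\]
Applying Lemma~\ref{bernelementmul}, every product ${\bf b}_{i,m}(x){\bf b}_{j,n}(x)$ collapses to a single scaled basis element ${\bf T}_{m,m+n}[i+1,i+j+1]\,{\bf b}_{i+j,m+n}(x)$, so the right-hand side becomes a linear combination of ${\bf b}_{\ell,m+n}(x)$, $\ell=0,\dots,m+n$. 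Gathering all terms with $i+j=\ell$, i.e.\ $j=\ell-i$, the coefficient of ${\bf b}_{\ell,m+n}(x)$ is
\[
c_\ell=\sum_{i=0}^{m}\xi_i\,\psi_{\ell-i}\,{\bf T}_{m,m+n}[i+1,\ell+1],
\]
where $\psi_{\ell-i}$ is read as $0$ whenever $\ell-i\notin\{0,\dots,n\}$.

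The remaining step is to recognise this as the $(\ell+1)$-st entry of a row-vector--matrix product. Passing to the one-based matrix indices of ${\bf T}_{m,m+n}$ and ${\bf \Gamma}_{\psi,m+n}$ and substituting the definition~\eqref{gammapsi}, the $(\ell+1)$-st component of ${\bf \xi}^{(m)}{\bf \Gamma}_{\psi,m+n}$ equals $\sum_{i=0}^{m}\xi_i\,\psi_{\ell-i}\,{\bf T}_{m,m+n}[i+1,\ell+1]$, which is exactly $c_\ell$; hence ${\bf c}={\bf \xi}^{(m)}{\bf \Gamma}_{\psi,m+n}$. For the second representation I would use $\Xi(x)\Psi(x)=\Psi(x)\Xi(x)$ together with the symmetry of the multiplication coefficient $\binom{m}{i}\binom{n}{j}/\binom{m+n}{i+j}$ under $(m,i)\leftrightarrow(n,j)$, which by~\eqref{generalbernlift} is precisely the identity ${\bf T}_{m,m+n}[i+1,i+j+1]={\bf T}_{n,m+n}[j+1,i+j+1]$; repeating the argument with the two factors interchanged and invoking~\eqref{gammaxi} then gives ${\bf c}={\bf \psi}^{(n)}{\bf \Gamma}_{\xi,m+n}$.

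I expect the only genuine difficulty to be the index bookkeeping: keeping the zero-based polynomial subscripts $i,j,\ell$ aligned with the one-based matrix indices of ${\bf T}$ and ${\bf \Gamma}$, and checking that the padding conventions ($\psi_{j-i}=0$ and $\xi_{j-i}=0$ for $j<i$, together with the automatic vanishing at out-of-range upper indices) make the rectangular matrix products reproduce precisely the constrained double sum, with no extraneous contributions.
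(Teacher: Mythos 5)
Your proof is correct and follows exactly the route the paper sets up: the paper states Theorem~\ref{bernmullem} without an explicit proof, relying on the double expansion of $\Xi(x)\Psi(x)$, the elementary-product reduction of Lemma~\ref{bernelementmul}, and the identification of the coefficients $\binom{m}{i}\binom{n}{j}/\binom{m+n}{i+j}$ with entries of the lifting matrix ${\bf T}_{m,m+n}$ from~\eqref{generalbernlift}. Your collection of terms along $i+j=\ell$, the index bookkeeping, and the symmetry argument ${\bf T}_{m,m+n}[i+1,i+j+1]={\bf T}_{n,m+n}[j+1,i+j+1]$ for the second representation supply precisely the details the paper leaves implicit.
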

\vspace{0.1cm}

For instance, we look at the following example:
\begin{Example}
	\rm Let us assume  $P_1(x)= \mat{cccc}\xi_0 &\xi_1 & \xi_2 & \xi_3\rix {\bf b}_3(x)$ and $P_2(x)= \mat{ccc}\psi_0& \psi_1 & \psi_2\rix{\bf b}_2(x)$ are two polynomials given in the Bernstein polynomial basis over a certain interval $[a, b]$ of degrees $3$ and $2$, respectively, we want to find ${\bf c}= \mat{ccc}c_0& .... & c_5\rix$ so that $P_3(x)= P_1(x)P_2(x)= {\bf c}{\bf  b}_5(x)$.
	
	From~\eqref{generalbernlift}, we find ${\bf T}_{3, 5}=  \left[ \begin {array}{cccccc} 1&\frac{2}{5}&\frac{1}{10}&&&\\ &
	\frac{3}{5}&\frac{3}{5}&\frac{3}{10}&&\\ &&\frac{3}{10}&\frac{3}{5}&\frac{3}{5}&
	\\ &&&\frac{1}{10}&\frac{2}{5}&1\end {array} \right]$ and $ {\bf \Gamma}_{\psi, 5}=\left[ \begin {array}{cccccc} \psi_{{0}}&\frac{2}{5}\,\psi_{{1}}&\frac{1}{10}\,\psi_{{2}}&&&
	\\ &\frac{3}{5}\,\psi_{{0}}&\frac{3}{5}\,\psi_{{1}}&\frac{3}{10}\,\psi_{{2}}&&
	\\ &&\frac{3}{10}\,\psi_{{0}}&\frac{3}{5}\,\psi_{{1}}&\frac{3}{5}\,\psi_{{2}}&
	\\ &&&\frac{1}{10}\,\psi_{{0}}&\frac{2}{5}\,\psi_{{1}}&\psi_{{2}}
	\end {array} \right],$  which yields $${\bf c}= \mat{cccc}\xi_0 &\xi_1 & \xi_2 & \xi_3\rix{\bf \Gamma}_{\psi, 5}=$$  \begin{small}$$\left[ \begin {array}{cccccc} \xi_{{0}}\psi_{{0}}&\frac{2}{5}\,\xi_{{0}}\psi_{{1}}+\frac{3}{5}
		\,\xi_{{1}}\psi_{{0}}&\frac{1}{10}\,\xi_{{0}}\psi_{{2}}+\frac{3}{5}\,\xi_{{1}}\psi_{{1}}+\frac{3}{10}\,\xi_{{2}
		}\psi_{{0}}&\frac{3}{10}\,\xi_{{1}}\psi_{{2}}+\frac{3}{5}\,\xi_{{2}}\psi_{{1}}+\frac{1}{10}\,\xi_{{3}}\psi_{{0}}
		&\frac{3}{5}\,\xi_{{2}}\psi_{{2}}+\frac{2}{5}\,\xi_{{3}}\psi_{{1}}&\xi_{{3}}\psi_{{2}}\end {array}
		\right].$$
	\end{small}
	
	Alternatively, $~{\bf T}_{2, 5}=  \left[ \begin {array}{cccccc} 1&\frac{3}{5}&\frac{3}{10}&\frac{1}{10}&&
	\\ &\frac{2}{5}&\frac{3}{5}&\frac{3}{5}&\frac{2}{5}&\\ &&\frac{1}{10}&
	\frac{3}{10}&\frac{3}{5}&1\end {array} \right]$ and $ {\bf \Gamma}_{\xi, 5}=  \left[ \begin {array}{cccccc} \xi_{{0}}&\frac{3}{5}\,\xi_{{1}}&\frac{3}{10}\,\xi_{{2}}&\frac{1}{10}
	\,\xi_{{3}}&&\\ &\frac{2}{5}\,\xi_{{0}}&\frac{3}{5}\,\xi_{{1}}&\frac{3}{5}\,\xi_{
		{2}}&\frac{2}{5}\,\xi_{{3}}&\\&& \frac{1}{10}\,\xi_{{0}}&\frac{3}{10}\,\xi_{{1}
	}&\frac{3}{5}\,\xi_{{2}}&\xi_{{3}}\end {array} \right]
	,$  which gives $${\bf c}= \mat{ccc}\psi_0 &\psi_1 & \psi_2\rix{\bf \Gamma}_{\xi, 5}=$$  \begin{small}$$\left[ \begin {array}{cccccc} \xi_{{0}}\psi_{{0}}&\frac{2}{5}\,\xi_{{0}}\psi_{{1}}+\frac{3}{5}
		\,\xi_{{1}}\psi_{{0}}&\frac{1}{10}\,\xi_{{0}}\psi_{{2}}+\frac{3}{5}\,\xi_{{1}}\psi_{{1}}+\frac{3}{10}\,\xi_{{2}
		}\psi_{{0}}&\frac{3}{10}\,\xi_{{1}}\psi_{{2}}+\frac{3}{5}\,\xi_{{2}}\psi_{{1}}+\frac{1}{10}\,\xi_{{3}}\psi_{{0}}
		&\frac{3}{5}\,\xi_{{2}}\psi_{{2}}+\frac{2}{5}\,\xi_{{3}}\psi_{{1}}&\xi_{{3}}\psi_{{2}}\end {array}
		\right].$$\end{small}
	
\end{Example}

Consequently, the integer-powers of a polynomial in the Bernstein polynomial basis can be obtained as follows:

\begin{Corollary}
	If $\Xi(x)= {\bf \xi}^{(n)} {\bf b}_n(x)$, then for a positive integer $p (> 1)$ we have:
	\begin{equation*}\label{bernpower}
		\displaystyle{\Xi^{p}(x)= {\bf \xi}^{(n)} (\prod_{j=2}^{p}{\bf \Gamma}_{\xi, (j\times n)}){\bf b}_{(p \times n)}(x)},
	\end{equation*}
	where each ${\bf \Gamma}_{\xi, (j\times n)}$  for $j= 2, 3, \cdots, p$ can be found through~\eqref{gammapsi}.
\end{Corollary}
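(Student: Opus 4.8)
The plan is to prove the identity by induction on $p$, mirroring the argument already used for the degree-graded power formula~\eqref{gardedn}, with Theorem~\ref{bernmullem} supplying the single ``multiply one more copy of $\Xi$'' step. For the base case $p=2$ I would apply Theorem~\ref{bernmullem} to the product $\Xi(x)\,\Xi(x)$ of two degree-$n$ Bernstein polynomials sharing the coefficient vector ${\bf \xi}^{(n)}$; taking the first form of that theorem with both factors played by $\Xi$ (so $\psi=\xi$ and $m=n$) gives $\Xi^2(x)={\bf \xi}^{(n)}{\bf \Gamma}_{\xi,2n}\,{\bf b}_{2n}(x)$, where ${\bf \Gamma}_{\xi,2n}[i,j]=\xi_{j-i}{\bf T}_{n,2n}[i,j]$ is exactly~\eqref{gammapsi} read with $\psi\mapsto\xi$. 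This matches $\prod_{j=2}^{2}{\bf \Gamma}_{\xi,(j\times n)}$.

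For the inductive step I would assume $\Xi^{p-1}(x)={\bf \xi}^{(n)}\bigl(\prod_{j=2}^{p-1}{\bf \Gamma}_{\xi,(j\times n)}\bigr){\bf b}_{(p-1)n}(x)$, so that $\Xi^{p-1}$ is available as a genuine Bernstein polynomial of degree $(p-1)n$ whose coefficient row vector is the already-computed product ${\bf w}_{p-1}:={\bf \xi}^{(n)}\prod_{j=2}^{p-1}{\bf \Gamma}_{\xi,(j\times n)}$ of length $(p-1)n+1$. Writing $\Xi^{p}(x)=\Xi^{p-1}(x)\,\Xi(x)$ and invoking Theorem~\ref{bernmullem}, the crucial choice is to treat $\Xi^{p-1}$ as the \emph{outer} (left) factor of degree $m=(p-1)n$, with ${\bf w}_{p-1}$ playing the role of ${\bf \xi}^{(m)}$, and $\Xi$ as the degree-$n$ factor entering through $\Gamma$. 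This yields $\Xi^{p}(x)={\bf w}_{p-1}{\bf \Gamma}_{\xi,pn}\,{\bf b}_{pn}(x)$ with ${\bf \Gamma}_{\xi,pn}[i,j]=\xi_{j-i}{\bf T}_{(p-1)n,pn}[i,j]$, again an instance of~\eqref{gammapsi}. Substituting the definition of ${\bf w}_{p-1}$ telescopes the factors into ${\bf \xi}^{(n)}\bigl(\prod_{j=2}^{p}{\bf \Gamma}_{\xi,(j\times n)}\bigr){\bf b}_{pn}(x)$, completing the induction.

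The main obstacle is not analytic but bookkeeping: one must keep the degree indices consistent so that the lifting matrix inside ${\bf \Gamma}_{\xi,(j\times n)}$ is ${\bf T}_{(j-1)n,\,jn}$ (arising from the degree $(j-1)n$ of the current partial power) rather than some other lifting matrix, and one must use the form ${\bf c}={\bf \xi}^{(m)}{\bf \Gamma}_{\psi,m+n}$ with $\Xi^{p-1}$ as the left multiplier. Choosing instead the symmetric form~\eqref{gammaxi}, with $\Xi$ placed outside, would force the high-degree coefficients of $\Xi^{p-1}$ into the $\Gamma$-matrix and destroy the telescoping. A quick dimension check confirms the chain is well formed: ${\bf \xi}^{(n)}$ is $1\times(n+1)$, each ${\bf \Gamma}_{\xi,(j\times n)}$ is $((j-1)n+1)\times(jn+1)$, and the successive products collapse to the $1\times(pn+1)$ coefficient vector paired with ${\bf b}_{pn}(x)$.
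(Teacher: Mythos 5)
Your proof is correct and follows essentially the route the paper intends: the corollary is presented there as an immediate consequence of Theorem~\ref{bernmullem} (no separate proof is written out), and its degree-graded analogue is proved by exactly the induction you perform, with the first form ${\bf c}={\bf \xi}^{(m)}{\bf \Gamma}_{\psi,\,m+n}$ of the theorem supplying each step and the lifting matrix ${\bf T}_{(j-1)n,\,jn}$ sitting inside each ${\bf \Gamma}_{\xi,(j\times n)}$ as in~\eqref{gammapsi}. Your bookkeeping choice of keeping $\Xi^{p-1}$ as the left factor, so that the coefficient vectors telescope while only the fixed coefficients $\xi_{j-i}$ enter the $\Gamma$-matrices, is precisely the reading the statement requires, and your dimension check confirms it.
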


\subsection{Lagrange Basis Multiplication}\label{lagmultp}

Let ${\bf L}_{k, m}(x)$  and  ${\bf L}_{j, n}(x)$ be  the $k$-th and  $j$-th basis elements of the Lagrange polynomial bases of degrees $m$ and $n$, respectively defined over $\{(\tau_i, p_i)\}_{i=0}^{m}$ and $\{(\tau_i, p_i)\}_{i=0}^{m}$, respectively. Then it is immediately observed that
\begin{equation*}
	\displaystyle{{\bf L}_{k, m}(x){\bf L}_{j, n}(x)= {\bf \tilde{h}}^T_{j,k}{\bf L}_{m+n}(x)},
\end{equation*}
where vector ${\bf L}_{m+n}(x)= \mat{cccc} L_{0, n+m}(x)& L_{1, n+m}(x)& \cdots & L_{n+m, n+m}(x)\rix^T$ is the Lagrange polynomial basis of degree $m+n$ defined over $\{(\tau_i, p_i)\}_{i=0}^{m+n}$, and
%\begin{equation}\label{bernmat2}
%
%			\displaystyle{ {\bf \tilde{h}}_{j,k}[i]=\begin{cases}
%				{\binom{m}{k}\binom{n}{j}\over\binom{m+n}{k+j}}, ~~~~~~~~~~~~~~~~~~~~~~~~~~~~~~~~~~i=k+j+1,\\
%				0, ~~~~~~~~~~~~~~~~~~~~~~~~~~~~~~~~~{\rm  elsewhere.}
%			\end{cases} }
%\end{equation}

\begin{equation}\label{lagmat2}
	{\bf \tilde{h}}_{j,k}[i+1]= \left\{
	\begin{array}{lll}
		\delta_{j, k}, & 0\leq i\leq\min{(m, n)}, \\
		0, & \min{(m, n)}+1 \leq i\leq\max{(m, n)}, \\
		{\bf L}_{k, m}(\tau_i){\bf L}_{j, n}(\tau_i), & \max{(m, n)}+1\leq i\leq m+n,
	\end{array}
	\right.
\end{equation}
where  $\delta_{j, k}$ is the Kronecker delta.

We can now extend this result to ${\bf L}_{k, m}(x){\bf L}_{n}(x)$, where vector ${\bf L}_{n}(x)= \mat{cccc} L_{0, n}(x)& L_{1, n}(x)& \cdots & L_{n, n}(x)\rix^T$ is the Lagrange polynomial basis of degree $n$, and state the following lemma whose proof is fairly straightforward:

\begin{lemma}\label{lagelementmul2}
	In the Lagrange polynomial basis,
\begin{equation*}
{\bf L}_{k, m}(x){\bf L}_{n}(x)= {\bf {\tilde{H}}}_{n, k}{\bf L}_{m+n}(x), \qquad k= 0, 1, \cdots, m
\end{equation*}
 where
	\begin{equation*}
		{\bf \tilde{H}}_{n, k}= \left[
		\begin{array}{ccc}
			{\bf {\tilde{h}}}^T_{0, k} \\ \hdashline[2pt/2pt]
			{\bf {\tilde{h}}}^T_{1, k} \\ \hdashline[2pt/2pt]
			\vdots \\ \hdashline[2pt/2pt]
			{\bf \tilde{h}}^T_{n, k}
		\end{array}
		\right]
	\end{equation*}
	and ${\bf \tilde{h}}_{j,k}$ for $j= 0, 1, \cdots, n$ is given by~\eqref{lagmat2}.
\end{lemma}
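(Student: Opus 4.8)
The plan is to reduce the lemma to the per-element product identity that immediately precedes it, namely ${\bf L}_{k, m}(x){\bf L}_{j, n}(x)= {\bf \tilde{h}}^T_{j,k}{\bf L}_{m+n}(x)$ with the coefficient rows ${\bf \tilde{h}}_{j,k}$ prescribed by~\eqref{lagmat2}, and then to assemble the full statement simply by stacking these scalar identities over the index $j$. First I would write the left-hand side componentwise. Since ${\bf L}_{n}(x)$ is the column vector of degree-$n$ basis elements ${\bf L}_{0,n}(x),\ldots,{\bf L}_{n,n}(x)$, the product is
\[
{\bf L}_{k, m}(x){\bf L}_{n}(x)=\begin{bmatrix} {\bf L}_{k, m}(x){\bf L}_{0, n}(x)\\ \vdots\\ {\bf L}_{k, m}(x){\bf L}_{n, n}(x)\end{bmatrix},
\]
so that its $(j+1)$-th entry is exactly the scalar polynomial ${\bf L}_{k, m}(x){\bf L}_{j, n}(x)$ for $j=0,1,\ldots,n$. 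Each such entry is a polynomial of degree $m+n$ and hence has a unique representation in the degree-$(m+n)$ Lagrange basis ${\bf L}_{m+n}(x)$ associated with the node set $\{\tau_i\}_{i=0}^{m+n}$.

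Next I would invoke the preceding element-wise identity to replace each entry by ${\bf \tilde{h}}^T_{j,k}{\bf L}_{m+n}(x)$, with ${\bf \tilde{h}}_{j,k}$ given by~\eqref{lagmat2}. Because the basis vector ${\bf L}_{m+n}(x)$ appears as a common right factor in every component, it can be pulled out on the right; what remains on the left is the matrix whose $(j+1)$-th row is ${\bf \tilde{h}}^T_{j,k}$. That matrix is precisely the ${\bf \tilde{H}}_{n, k}$ displayed in the statement, and the identity ${\bf L}_{k, m}(x){\bf L}_{n}(x)={\bf \tilde{H}}_{n, k}{\bf L}_{m+n}(x)$ follows at once.

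I expect no serious obstacle: the argument is nothing more than the vertical concatenation of the $n+1$ scalar identities, the only content being the recognition that the shared right factor ${\bf L}_{m+n}(x)$ lets one collect the individual coefficient rows into a single operational matrix. The one step that genuinely deserves care—and which is already settled in the display preceding the lemma—is the derivation of~\eqref{lagmat2} itself: since the product of two basis polynomials has degree $m+n$, its Lagrange coefficient against ${\bf L}_{i,m+n}(x)$ is just its value at the node $\tau_i$, i.e. ${\bf L}_{k,m}(\tau_i){\bf L}_{j,n}(\tau_i)$, and the three-way case split in~\eqref{lagmat2} merely records whether $\tau_i$ is a node shared by both the degree-$m$ and degree-$n$ bases, a node of only the higher-degree basis, or a node of neither. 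Granting that element-wise formula, the lemma is immediate, which is why its proof is noted to be straightforward.
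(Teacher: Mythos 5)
Your proposal is correct and follows essentially the same route as the paper, which states this lemma immediately after the element-wise identity ${\bf L}_{k, m}(x){\bf L}_{j, n}(x)= {\bf \tilde{h}}^T_{j,k}{\bf L}_{m+n}(x)$ and explicitly declares the proof ``fairly straightforward''---meaning precisely the vertical stacking of the $n+1$ scalar identities and the extraction of the common right factor ${\bf L}_{m+n}(x)$ that you carry out. Your closing justification of \eqref{lagmat2} via evaluation at the nodes $\tau_i$ (coefficient in the degree-$(m+n)$ Lagrange basis equals value at the corresponding node) is likewise the reasoning underlying the display preceding the lemma in the paper.
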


Similar to what we have for the Bernstein polynomial basis, here ${\bf {\tilde{H}}}_{n, k}$ can be used in Theorem~\eqref{th1} to find ${\bf \mathcal{H}}_{n,m}$ and from there calculate intra-basis polynomial multiplications in the Lagrange polynomial basis. However, we derive the intra-basis multiplication formula based on the lifting matrix for the Lagrange polynomial basis given by~\eqref{generallaglift}.

Recall that if the Lagrange nodes and values of two polynomials  $P(x)$ and $Q(x)$ given in the Lagrange polynomial basis of the same degree are $\{(\tau_i, p_i)\}_{i=0}^{n}$ and $\{(\tau_i, q_i)\}_{i=0}^{n}$, respectively, then the Lagrange values of their multiplication $P(x)Q(x)$ at the same nodes are $\{(\tau_i, p_iq_i)\}_{i=0}^{n}$. This important property is the key to finding a multiplication formula for polynomials given in Lagrange polynomial basis.

Suppose that $$\Xi(x)= \mat{ccccc}\xi_0&\xi_1& \cdots &\xi_{m-1}&\xi_m\rix {\bf L}_{m}(x)= {\bf \xi}^{(m)}{\bf L}_{m}(x),$$ defined by the set of nodes $\{\tau_i\}_{i=0}^{m}$ and $$\Psi(x)= \mat{ccccc}\psi_0&\psi_1& \cdots &\psi_{n-1}&\psi_n\rix {\bf L}_{n}(x)= {\bf \psi}^{(n)}{\bf L}_{n}(x),$$ defined by the set of nodes $\{\tau_i\}_{i=0}^{n}$ are two polynomials given in the Lagrange polynomial basis of degrees $n$ and $m (\geq n)$, respectively. Adding additional nodes $\{\tau_i\}_{i=m+1}^{m+n}$ and using~\eqref{generallaglift}, we can write the polynomials as
\begin{equation*}
	\Xi(x)= \mat{ccccc}s_0&s_1& \cdots &s_{n+m-1}&s_{n+m}\rix{\bf L}_{m+n}(x)= {\bf s}^{(n+ m)}{\bf L}_{m+n}(x),
\end{equation*}
where  $${\bf s}^{(n+ m)}=  {\bf \xi}^{(m)} {\bf R}_{m, m+n},$$ and
\begin{equation*}
	\Psi(x)= \mat{ccccc}t_0&t_1& \cdots &t_{n+m-1}&t_{n+m}\rix{\bf L}_{m+n}(x)= {\bf t}^{(n+ m)}{\bf L}_{m+n}(x),
\end{equation*}
with $${\bf t}^{(n+ m)}{\bf L}_{m+n}(x)= {\bf \psi}^{(n)}{\bf L}_{m+n}(x) {\bf R}_{n, m+n}.$$
\vspace{0.1cm}
\begin{Theorem}\label{lagmullem}
	For the Lagrange polynomials $\Xi(x)$ and $\Psi(x)$ given above,
	\begin{equation*}
		\Xi(x)\Psi(x)= \mat{ccccc}s_0t_0&s_1t_1& \cdots &s_{n+m-1}t_{n+m-1}&s_{n+m}t_{n+m}\rix{\bf L}_{m+n}(x).
	\end{equation*}
\end{Theorem}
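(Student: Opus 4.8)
The plan is to exploit the defining feature of the Lagrange basis, namely that the coefficient attached to $L_{i,N}(x)$ is precisely the value of the represented polynomial at the node $\tau_i$, since $L_{j,N}(\tau_i)=\delta_{ij}$. First I would observe that lifting does not change the underlying polynomial: by Lemma~\ref{liftlaglem} and~\eqref{matlag}, the vectors ${\bf s}^{(n+m)}={\bf \xi}^{(m)}{\bf R}_{m,m+n}$ and ${\bf t}^{(n+m)}={\bf \psi}^{(n)}{\bf R}_{n,m+n}$ are merely new coordinate representations of the \emph{same} functions $\Xi$ and $\Psi$ over the common, enlarged node set $\{\tau_i\}_{i=0}^{m+n}$. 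Consequently the interpolation property gives $\Xi(\tau_i)=s_i$ and $\Psi(\tau_i)=t_i$ for every $i=0,\dots,m+n$.

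Next I would perform the degree count. Since $\Xi$ has degree $m$ and $\Psi$ has degree $n$, the product $\Xi(x)\Psi(x)$ is a polynomial of degree exactly $m+n$, which is precisely the degree of the basis ${\bf L}_{m+n}(x)$; the enlarged node set contains exactly $m+n+1$ distinct points, the right number to determine such a polynomial uniquely.

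The heart of the argument is then a pointwise evaluation: for each $i$,
\[
\big(\Xi\Psi\big)(\tau_i)=\Xi(\tau_i)\,\Psi(\tau_i)=s_i\, t_i .
\]
By the same interpolation property applied in degree $m+n$, the Lagrange coefficients of $\Xi(x)\Psi(x)$ in the basis ${\bf L}_{m+n}(x)$ are exactly its values at the nodes, i.e.\ the products $s_i t_i$, which is the claimed formula. Uniqueness of the degree-$(m+n)$ interpolant through $m+n+1$ nodes closes the argument.

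I do not anticipate a serious obstacle here; the only point requiring care is the bookkeeping of the node sets --- making sure that both polynomials are lifted to the \emph{same} ordered collection $\{\tau_i\}_{i=0}^{m+n}$ so that the entrywise product $s_i t_i$ corresponds to a genuine common abscissa $\tau_i$, and confirming that the added nodes $\{\tau_i\}_{i=m+1}^{m+n}$ (together with $\{\tau_i\}_{i=n+1}^{m}$ needed to lift $\Psi$ up to $\Xi$'s node set) keep all $m+n+1$ points distinct, so that the lifting matrices of Lemma~\ref{liftlaglem} are well defined.
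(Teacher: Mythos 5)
Your proposal is correct and follows exactly the route the paper takes: the paper states the theorem without a formal proof, relying on the immediately preceding observation that Lagrange coefficients are nodal values (so products of polynomials correspond to entrywise products of values) together with lifting both factors to the common node set $\{\tau_i\}_{i=0}^{m+n}$ via ${\bf R}_{m,m+n}$ and ${\bf R}_{n,m+n}$. Your write-up simply makes explicit what the paper leaves implicit --- the identities $s_i=\Xi(\tau_i)$, $t_i=\Psi(\tau_i)$, the degree count, and uniqueness of the degree-$(m+n)$ interpolant --- which is a faithful formalization of the same argument.
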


In particular for the integer-powers of a polynomial in the Lagrange polynomial basis, we have
\begin{Corollary}
	If $\Xi(x)=  {\bf \xi}^{(n)} {\bf L}_n(x)$, then $\Xi^p (x)$ for a positive integer $p (> 1)$ and with additional nodes $\{\tau_i\}_{i= n+1}^{(p\times n)}$ is
	
	\begin{equation}\label{lagpower}
		\displaystyle{\Xi^{p}(x)= \mat{ccccc}s_0^{p}&s_1^{p}& \cdots &s_{((p\times n)-1)}^{p} &s_{(p\times n)}^{p}\rix{\bf L}_{(p\times n)}(x)},
	\end{equation}
	where $\mat{ccccc}s_0&s_1& \cdots &s_{((p\times n)-1)}&s_{(p\times n)}\rix= {\bf \xi}^{(n)}{\bf R}_{n, (p\times n)}$.
\end{Corollary}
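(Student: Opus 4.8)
The plan is to prove \eqref{lagpower} by induction on $p$, using Theorem~\ref{lagmullem} as the engine and the defining interpolation property of the Lagrange basis, namely $L_{i,n}(\tau_j)=\delta_{ij}$ (immediate from \eqref{eq.lag}), as the conceptual backbone. The one fact I would isolate first is that, in any Lagrange basis, the coefficient attached to $L_{i,n}(x)$ is exactly the value of the represented polynomial at $\tau_i$; consequently, \emph{lifting} a polynomial to a higher-degree Lagrange basis by adjoining the extra nodes $\{\tau_i\}_{i=n+1}^{p\times n}$ does not alter its values at any node, so the lifted coefficient vector ${\bf s}= {\bf \xi}^{(n)}{\bf R}_{n, (p\times n)}$ produced by Lemma~\ref{liftlaglem} satisfies $s_i=\Xi(\tau_i)$ for every $i=0,\dots,p\times n$. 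This single observation is what renders the whole statement essentially transparent.

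For the base case $p=2$ I would apply Theorem~\ref{lagmullem} with both factors equal to $\Xi$: lifting $\Xi$ from degree $n$ to degree $2n$ gives the value vector ${\bf s}= {\bf \xi}^{(n)}{\bf R}_{n, 2n}$, and since the two lifted vectors coincide, the pointwise-product rule of Theorem~\ref{lagmullem} yields $\Xi^2(x)= \mat{ccc}s_0^2 & \cdots & s_{2n}^2\rix{\bf L}_{2n}(x)$. For the inductive step I would write $\Xi^{p}(x)=\Xi(x)\,\Xi^{p-1}(x)$ and invoke the inductive hypothesis that $\Xi^{p-1}$ is represented at degree $(p-1)\times n$ with coefficients $s_i^{p-1}$. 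Applying Theorem~\ref{lagmullem} to the pair of degrees $n$ and $(p-1)\times n$, I would lift both factors to degree $p\times n$ over the common node set; the fact isolated above then does the work, since the lift of $\Xi$ carries $s_i=\Xi(\tau_i)$ to node $\tau_i$ while the lift of $\Xi^{p-1}$ carries $\Xi^{p-1}(\tau_i)=s_i^{p-1}$ to the same node, so their pointwise product is $s_i^{p}$ and \eqref{lagpower} follows.

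The step I expect to require the most care is the consistency of the node sets and lifting matrices across the induction. I would need to verify that the additional nodes $\{\tau_i\}_{i=n+1}^{p\times n}$ are fixed once and for all and that the composition identity underlying Lemma~\ref{liftlaglem}, namely ${\bf R}_{n, (p\times n)}= {\bf R}_{n, n+1}\cdots{\bf R}_{(p\times n)-1, (p\times n)}$, is compatible with lifting $\Xi$ and $\Xi^{p-1}$ \emph{separately} to the same target degree; otherwise the two value vectors would be indexed over different nodes and their pointwise product would be meaningless. Once this bookkeeping is settled, the only remaining verification is the elementary identity $s_i\cdot s_i^{p-1}=s_i^{p}$ at each node, so no genuine computation beyond index-tracking is involved. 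A fully node-based alternative---observing directly that $\Xi^{p}(\tau_i)=\big(\Xi(\tau_i)\big)^{p}=s_i^{p}$ and reading off the Lagrange coefficients at degree $p\times n$---would bypass the induction altogether and could serve as a cleaner substitute should the inductive bookkeeping become cumbersome.
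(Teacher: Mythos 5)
Your proposal is correct and takes essentially the same route the paper intends: the paper states this corollary without a separate proof, treating it as an immediate consequence of Theorem~\ref{lagmullem} together with the key property it highlights just beforehand --- that Lagrange coefficients are precisely node values, which lifting via ${\bf R}_{n,m}$ preserves. Your induction on $p$ (and especially your direct node-based shortcut $\Xi^{p}(\tau_i)=\bigl(\Xi(\tau_i)\bigr)^{p}=s_i^{p}$) is exactly that implicit argument made explicit, with the node-set bookkeeping correctly handled.
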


This is illustrated in the following example:
\begin{Example} \rm If $P(x)= \mat{ccc}p_0 & p_1 & p_2 \rix {\bf L}_2(x)$ is a polynomial given in the Lagrange polynomial basis of degree $2$ at the nodes $\{\tau_k\}_{k=0}^{2}$, our aim is  to find ${\bf q}= \mat{ccccc}q_0& q_1& q_2& q_3& q_4\rix$ so that $P^2(x)= {\bf q}{\bf L}_4(x)$ at the nodes $\{\tau_k\}_{k=0}^{4}$.
	
	We first need to find ${\bf R}_{2, 4}$ using~\eqref{generallaglift}:
	\begin{equation*}
		{\bf R}_{2, 4}=  \left[ \begin {array}{ccccc} 1&0&0&-{\frac {w_{{3,0}}}{w_{{3,3}}}}&{
			\frac {w_{{3,0}}w_{{4,3}}-w_{{4,0}}w_{{3,3}}}{w_{{3,3}}w_{{4,4}}}}
		\\ 0&1&0&-{\frac {w_{{3,1}}}{w_{{3,3}}}}&{\frac {w_{
					{3,1}}w_{{4,3}}-w_{{4,1}}w_{{3,3}}}{w_{{3,3}}w_{{4,4}}}}
		\\ 0&0&1&-{\frac {w_{{3,2}}}{w_{{3,3}}}}&{\frac {w_{
					{3,2}}w_{{4,3}}-w_{{4,2}}w_{{3,3}}}{w_{{3,3}}w_{{4,4}}}}\end {array}
		\right].
	\end{equation*}
	
	Next, we find $$\mat{ccccc}s_0&s_1&s_2&s_3&s_4\rix= \mat{ccc}p_0&p_1&p_2\rix{\bf R}_{2, 4},$$ which yields:
	
	\begin{equation*}
		\left\{
		\begin{array}{ll}
			s_0= p_0, &  \\
			s_1= p_1, &  \\
			s_2= p_2, &  \\
			s_3= -{\frac {p_{{0}}w_{{3,0}}}{w_{{3,3}}}}-{\frac {p_{{1}}w_{{3,1}}}{w_{{3
							,3}}}}-{\frac {p_{{2}}w_{{3,2}}}{w_{{3,3}}}}
			, &  \\
			s_4= p_{{0}} \left( -{\frac {w_{{4,0}}}{w_{{4,4}}}}+{\frac {w_{{3,0}}w_{{4,
							3}}}{w_{{3,3}}w_{{4,4}}}} \right) +p_{{1}} \left( -{\frac {w_{{4,1}}}{
					w_{{4,4}}}}+{\frac {w_{{3,1}}w_{{4,3}}}{w_{{3,3}}w_{{4,4}}}} \right) +
			p_{{2}} \left( -{\frac {w_{{4,2}}}{w_{{4,4}}}}+{\frac {w_{{3,2}}w_{{4,
							3}}}{w_{{3,3}}w_{{4,4}}}} \right). &
		\end{array}
		\right.
	\end{equation*}
	Finally, from~\eqref{lagpower} and for $i=0, 1, 2, 3, 4$, we have $q_i= s_i^2$.
\end{Example}

%=======================================================================================================================================================================================================================
\section{An Application in Stochastic Galerkin Schemes}
Stochastic finite element method \cite{GhanemSpanos} is a  well-known approach for alleviating data uncertainty through the numerical solution of partial differential equations. The main characteristic of such stochastic Galerkin methods is a variational formulation in which the projection  spaces consist of random fields rather than deterministic functions.

In this section, we  give a result to generate  the stochastic Galerkin matrices. These matrices  arise in the discretization of linear differential equations with the random coefficient  and  possess attractive structural and sparsity properties (see e.g.,  \cite {GhanemSpanos, ernst2010Siam, Ullmann2012} and references therein). Besides being useful in the  analysis of stochastic Galerkin schemes, we hope that our result would be useful in developing   efficient iterative  methods for solving the related linear systems.

According to \cite{Ullmann2012}, for an appropriate $M$ and a finite multi-index set $\mathcal{F} \subseteq \mathbb{N}_0^M$, the discretized  linear system related to  the stochastic Galerkin finite element method has the form ${\bf \hat{A}}u=f$, where
\begin{equation*}
	{\bf \hat{A}}=\sum_{\alpha \in \mathcal{F}} {\bf K}_{\alpha} \otimes {\bf G}_{\alpha},
\end{equation*}
is the sum of Kronecker products of matrices in which ${\bf K}_{\alpha}$ and  ${\bf G}_{\alpha}$ are associated with the deterministic/stochastic function spaces.

For $\alpha=(\alpha_1,\ldots,\alpha_M) \in \mathcal{F}$, the stochastic Galerkin matrix  ${\bf G}_{\alpha}$ is given by ${\bf G}_{\alpha}={\bf U}_{\alpha_M, p_M} \otimes \ldots \otimes {\bf U}_{\alpha_2,p_2} \otimes {\bf U}_{\alpha_1,p_1}$, where
for each $m=1,\ldots,M$, ${\bf U}_{\alpha_m, p_m}$ is a $(p_m+1)\times (p_m+1)$ matrix with entries in the form of
\begin{equation*}
	{\bf U}_{\alpha_m,p_m}[i,j]=\Bigl \langle \psi_{\alpha_m}(\xi_m) \psi_{i}(\xi_m)\psi_{j}(\xi_m) \Bigr\rangle, ~~~~~i,j=0,\ldots,p_m.
\end{equation*}

Here  $\langle~ .~ \rangle$ denotes the expectation with respect to a specified probability density $\rho_m$ and $\{\psi_j(\xi_m)\}$ are univariate orthonormal polynomials with respect to the weight function $\rho_m$ appearing in  polynomial chaos (PC) expansion of the coefficient term as a  random field.

As such, in general we need to compute the typical $(p+1) \times (p+1)$ matrix ${\bf U}_{k,p}$ whose entries are given by
\begin{equation}\label{UEntires}
	{\bf U}_{k,p}[i,j]=\bigl \langle \psi_{k} \psi_{i}\psi_{j} \bigr\rangle,\qquad i,j=0,\ldots,p,
\end{equation}
where $\{\psi_j(y)\}$ are univariate orthonormal polynomials  with respect to a specified weight function $\rho(y)$:
\begin{equation*}\label{orthonormality}
	\bigl \langle  \psi_{i}\psi_{j} \bigr\rangle =\int \psi_{i}(y)\psi_{j}(y) \rho(y) dy=\delta_{ij}, \qquad i,j=0,\ldots,p.
\end{equation*}

The following lemma shows that  the univariate  stochastic Galerkin matrix ${\bf U}_{k,p}$ is a principal submatrix of the corresponding   matrix ${\bf H}_{p,k}$ given in Lemma~\ref{lem1}:

%\begin{lemma}\label{stochasticLemma}
%	For any $k\geq 1$, the  $(p+1) \times (p+1)$ Galerkin matrix ${\bf U}_{k,p}$, whose entries are specified in~\eqref{UEntires}, is equal to ${\bf U}_{k,p}={\bf H}_{p,k} \left(1:p+1,~1:p+1\right)$, the submatrix of ${\bf H}_{p,k}$ formed by its first $p+1$ rows and columns.
%\end{lemma}
\begin{lemma}\label{stochasticLemma}
	For any $k\geq 1$, the $(p+1) \times (p+1)$ univariate Galerkin matrix  ${\bf U}_{k,p}$ is built up from the first $p+1$ rows and columns of the operational matrix ${\bf H}_{p,k}$, i.e.
	\begin{equation*}
		{\bf U}_{k,p}={\bf H}_{p,k} \left(1:p+1,~1:p+1\right).
	\end{equation*}
	%, is equal to   the submatrix of ${\bf H}_{p,k}$ formed by its first $p+1$ rows and columns.
\end{lemma}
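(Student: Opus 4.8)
The plan is to identify the orthonormal family $\{\psi_j\}$ with exactly the degree-graded basis $\{\phi_j\}$ of Lemma~\ref{lem1}, and then read off the entries of ${\bf U}_{k,p}$ by pairing the product-expansion supplied by that lemma against the orthonormality relation. Since $\rho$ is a probability density, $\langle 1,1\rangle=\int\rho=1$, so the normalization $\psi_0=\phi_0=1$ is consistent with the convention $\phi_0(x)=1$ of~\eqref{eq.rec}; hence the $\psi_j$ form a degree-graded basis to which Lemma~\ref{lem1} applies with its recurrence coefficients $\alpha_j,\beta_j,\gamma_j$.

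First I would fix $k$ and expand the product $\psi_k\psi_i$ in the basis using Lemma~\ref{lem1}. Reading the $(i+1)$-st row of the identity $\phi_{k,m}(x){\bf \Phi}_p(x)={\bf \tilde{H}}_{p,k}{\bf \Phi}_{p+m}(x)$ and recalling that ${\bf \tilde{H}}_{p,k}=[\,{\bf H}_{p,k}\mid{\bf 0}\,]$, this says, for $0\le i\le p$, that
\begin{equation*}
\psi_k(y)\,\psi_i(y)=\sum_{c=1}^{p+k+1}{\bf H}_{p,k}[i+1,c]\,\psi_{c-1}(y).
\end{equation*}

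Next I would multiply this identity by $\psi_j(y)$ and integrate against the weight $\rho$. By the orthonormality $\langle\psi_{c-1}\psi_j\rangle=\delta_{c-1,j}$, every term on the right vanishes except the one with $c=j+1$, leaving
\begin{equation*}
{\bf U}_{k,p}[i,j]=\langle\psi_k\psi_i\psi_j\rangle={\bf H}_{p,k}[i+1,\,j+1],\qquad 0\le i,j\le p.
\end{equation*}
Re-indexing to the $1$-based convention used for ${\bf H}_{p,k}$, this is precisely the assertion that ${\bf U}_{k,p}$ coincides with the top-left block ${\bf H}_{p,k}(1:p+1,\,1:p+1)$.

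The step that needs the most care is confirming that the surviving column index stays inside the retained block: since $j$ ranges over $0,\dots,p$, the selected column is $c=j+1\le p+1$, so orthonormality never reaches into the discarded columns $p+2,\dots,p+k+1$ of ${\bf H}_{p,k}$. I would also emphasize that the argument does not use the explicit recurrence-based formula~\eqref{Hmatrix}: only the product-expansion property of Lemma~\ref{lem1} together with the orthonormality of the $\psi_j$ enters, so the identity is a purely structural consequence. The boundary case $k=0$ is consistent as well, giving ${\bf U}_{0,p}={\bf I}_{p+1}={\bf H}_{p,0}$ by Proposition~\ref{p1}(a).
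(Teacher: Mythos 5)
The paper states Lemma~\ref{stochasticLemma} without any proof, so there is nothing to compare against; your argument supplies exactly the justification that the surrounding text implies, and it is correct. The key steps all check out: a probability density $\rho$ gives $\|1\|=1$, so the orthonormal family has $\psi_0=1$ and satisfies a (symmetric) three-term recurrence of the form~\eqref{eq.rec}, hence Lemma~\ref{lem1} applies; the $(i+1)$-st row of~\eqref{eq9} expands $\psi_k\psi_i$ in the basis $\{\psi_{c-1}\}$; and pairing against $\psi_j$ with $\langle\psi_{c-1}\psi_j\rangle=\delta_{c-1,j}$ isolates the entry ${\bf H}_{p,k}[i+1,j+1]$, with the surviving column $j+1\le p+1$ indeed lying in the retained principal block. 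Your observations that only the expansion property (not the explicit formula~\eqref{Hmatrix}) is needed, and that full orthonormality of the family up to degree $p+k$ (not merely up to degree $p$) is what kills the discarded columns, are precisely the points a careful write-up should make; the $k=0$ consistency check against Proposition~\ref{p1}(a) is a sensible sanity check even though the lemma assumes $k\ge 1$.
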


%As a consequence of this lemma (For further clarification) ,  in passing from ${\bf H}_{p,k}$ to ${\bf U}_{k,p}$, the last $n$ columns of ${\bf H}_{p,k}$  are deleted and those remaining give the  univariate Galerkin matrix $U_{k,p}$.

Lemma \ref{stochasticLemma} enables us to use our matrices ${\bf H}_{p,k}$ to construct the univariate stochastic Galerkin matrices ${\bf U}_{k,p}$ for any type of degree-graded (especially orthogonal) polynomial that may appear in PC expansion of the random coefficient term.
For ease of exposition of  the lemma, we work with  the same conditions as outlined in \cite{ernst2010Siam} to obtain the univariate stochastic  Galerkin matrices,  ${\bf U}_{k,p}$, for two main classes of  orthonormal polynomial bases:

{\bf Case 1.} When the univariate or multivariate random variables have normal distributions, the PC expansion basis consists of Hermite orthogonal polynomials, \cite{GhanemSpanos}. Let $H_n(x)$ be the $n$-th basis element of the  Hermite polynomial basis. It is known that for  these polynomials, the three term recurrence relation is
\begin{equation*}
	xH_{n}(x)=\frac{1}{2}H_{n+1}(x)+nH_{n-1}(x),\qquad \quad n=0,1,...
\end{equation*}
with $H_{-1}(x)=0,~H_0(x)=1$ (see e.g. \cite{Andrews1999Cambridge}).   Let  $\psi_n(x)=\frac{1}{\sqrt{2^nn!}} H_n(\frac{x}{\sqrt{2}})$ represent the normalization of $H_n$ such that $\|\psi_n\|^2=1$ with respect to the standard Gaussian weight function $\rho(x)=\frac{{\rm e}^{-x^2/2}}{\sqrt{2 \pi}}$. Under these conditions, the three-term recurrence relation of the orthonormal polynomials, $\psi_n$,  can be described by
$x \psi_n=\sqrt{n+1} \psi_{n+1}+ \sqrt{n} \psi_{n-1}, \quad n=0,1,\ldots .$

Using the results of Lemma \ref{lem1}  and Proposition \ref{p1} to construct the operational matrix  ${\bf H}_{p,k}$  for the Hermite polynomial basis with $k=3$ and $p=5$, we have
\begin{equation*}
	{\bf H}_{5,3}= \begin{bmatrix}
		& & & 1& & & & & \\
		& & \sqrt{3}& & 2& & & & \\
		& \sqrt{3}& & 3 \sqrt{2}& & \sqrt{10}& & & \\
		1& & 3\sqrt{2}& & 3 \sqrt{6}& & 2\sqrt{5}& &\\
		& 2& & 3 \sqrt{6}& 0& 2\sqrt{30}& & \sqrt{35}& \\
		& & \sqrt{10}& & 2\sqrt{30}& & 15& & 2\sqrt{14}
	\end{bmatrix},
\end{equation*}
and from there the univariate Galerkin matrix, ${\bf U}_{3,5}$,  for the Hermite polynomial basis becomes
\begin{equation*}
	{\bf U}_{3,5}= \begin{bmatrix}
		& & & 1& & \\
		& & \sqrt{3}& & 2& \\
		& \sqrt{3}& & 3 \sqrt{2}& & \sqrt{10}\\
		1& & 3\sqrt{2}& & 3 \sqrt{6}&\\
		& 2& & 3 \sqrt{6}& & 2\sqrt{30}\\
		& & \sqrt{10}& & 2\sqrt{30}&
	\end{bmatrix}.
\end{equation*}

%\end{Example}
%\begin{Example}

{\bf Case 2.} When the univariate or multivariate random variables have uniform distributions, the PC expansion basis consists of Legendre orthogonal polynomials\cite{GhanemSpanos}.

The operational matrix  ${\bf H}_{p,k}$  for the orthonormal Legendre polynomial basis can be used  for obtaining the stochastic Galerkin matrix $U_{k,p}$. For example with $k=4$ and $p=3$,   we get
%Let $\psi_n(x)=\sqrt{2n+1} p_n(\frac{x}{\sqrt{3}})$  be the normalization of the Legendre polynomial $p_n$, so that $\|\psi_n\|^2=1$ with respect to the density function $\rho(x)=\frac{1}{2\sqrt{3}}$ on the interval $(-\sqrt{3}, \sqrt{3})$.
%The three-term recurrence relation for the orthonormal polynomials $\psi_n$  is
%	\begin{equation}
%	x \psi_n= \frac{(n+1) \sqrt{3(2n+1)}}{(2n+1) \sqrt{2n+3}} \psi_{n+1}+ \frac{n \sqrt{3(2n+1)}}{(2n+1) \sqrt{2n-1}}\psi_{n-1}, ~~~n=1,2,\ldots
%	\end{equation}
%	and $x \psi_0(x)=\psi_1(x)$, for $n=0$. For example, we have
%%
%
%\begin{equation*}
%{\bf H}_{3,4}= \begin{bmatrix}
% &  &  &  &1 & & & \\
% &  &  & \frac{4}{\sqrt{21}} & &\frac{5}{\sqrt{33}} & &\\
% &  & \frac{6}{7}  &  &\frac{20\sqrt{5}}{77}  & &\frac{15 \sqrt{\frac{5}{13}}}{11} & \\
% &  \frac{4}{\sqrt{21}}&  & \frac{6}{11} & &\frac{60}{13\sqrt{77}} & & \frac{35 \sqrt{\frac{35}{3}}}{143}\\
%\end{bmatrix}.
%\end{equation*}
%
%The univaraite Galerkin matrix $U_{4,3}$ in connection with the Legendre polynomial basis is then obtained  as
\begin{equation*}
	{\bf U}_{4,3}= \begin{bmatrix}
		&  &  &  \\
		&  &  & \frac{4}{\sqrt{21}}\\
		&  & \frac{6}{7}  &  \\
		&  \frac{4}{\sqrt{21}}&  & \frac{6}{11}
	\end{bmatrix}.
\end{equation*}

Both of the above examples can be confirmed by the  explicit formulas   reported in \cite[Appendix A]{ernst2010Siam}.
%\end{Example}
%Besides being helpful in the implementation of stochastic Galerkin matrices, due to recursive and numeric nature of the matrices ${\bf H}_{p,k}$, our results  can be applied for the design and analysis of efficient iterative solvers for linear systems arising from discretization of random elliptic problems.

\section{Concluding Remarks}

Formulas and techniques for intra-basis polynomial multiplication are given in this work with emphasis on degree-graded and non-degree-graded polynomial bases.
In particular, the important role that the operational matrix ${\bf \tilde{H}}_{n,k}$ plays in the process of intra-basis multiplications of polynomials is highlighted. Note that this work does not exhaustively study the computational complexity and numerical analysis of the presented techniques. One can devise methods for making these polynomial multiplication algorithms faster and more accurate.

It is shown in this work that the matrices appearing in the stochastic Galerkin discretization of linear PDEs with random coefficients are sub-matrices of matrices derived for intra-basis multiplication in degree-graded polynomial bases (i.e., ${\bf H}_{p,k}$). To that end, all one needs is the three-term recurrence coefficients in the polynomial degree-graded basis under consideration. Not only are our results useful in constructing those stochastic Galerkin matrices, but we also hope that due to the recursive structure of ${\bf H}_{p,k}$, our results can be used in devising efficient iterative techniques for solving linear systems occurring from the stochastic Galerkin discretization.
More importantly, we expect these intra-basis techniques to find applications in real-world problems where direct and reliable multiplication and division of functions approximated by special polynomials are of utmost importance.

% This article provides some essential formulas and techniques to prepare grounds (some topics) for further research. More things can be done from this point. For one thing, the complexity and numerical analysis of the presented techniques are not the main concern of this work.

%\section*{References}
%
%\bibliography{mybibfile}

\begin{thebibliography}{99}
	{\small
		\bibitem{1} B. K. Alpert and V. Rokhlin, A fast algorithm for the evaluation of Legendre expansions, \textit{SIAM J. Sci. Stat. Comput.}, 12 (1991), 158-179.
		
		\bibitem{amiraslani2004EACA}
		A. Amiraslani, Dividing polynomials when you only know their values, in \textit{Proceedings EACA}, Gonzalez-Vega L. and Recio T., Eds., (2004), 5-10.
		
		\bibitem{amiraslani2016differentiation}
		A. Amiraslani, Differentiation matrices in polynomial bases, \textit{Math. Sci.} 5 (2016), 45-55.
		
		\bibitem{amiraslani2019differentiation}
		A. Amiraslani, R.M. Corless and M. Gunasingham, Differentiation matrices for univariate polynomials, \textit{Numer. Algor.} 83 (2019), 1-31.
		
		\bibitem{Andrews1999Cambridge}
		G.E. Andrews, R. Askey and R. Roy, Special Functions, Cambridge University Press, (1999).
		
		\bibitem{barnett1984LinearAlgebra}
		S. Barnett, Division of generalized polynomials using the comrade matrix, \textit{Linear Alggebra Appl.}, 60 (1984), 159-175.	
		
		\bibitem{baszenski1997LineaAlgebra}
		G. Baszenski  and M. Tasche, Fast polynomial multiplication and convolutions related to the discrete cosine transform, \textit{Linear Algebra
			Appl.}, 252(1-3) (1997) 1-25.
		
		\bibitem{c}
		Z. Battles and L. Trefethen, An extension of MATLAB to continuous fractions and operators, \textit{SIAM J. Sci. Comp.}, 25 (2004), 1743-1770.
		
		\bibitem{berrut2004SIAM}
		J.P. Berrut  and L.N. Trefethen, Barycentric Lagrange interpolation, \textit{SIAM Review}, 46(3) (2004), 501-517.
		
		\bibitem{boyd}
		J.P. Boyd, Chebyshev and Fourier Spectral Methods, Dover Publication,  N.Y., 2001.
		
		\bibitem{a}
		N. Brisebarre and M. Jolde, Chebyshev Interpolation Polynomial based Tools for Rigorous Computing,  in \textit {Proceedings  ISSAC '10}, ACM, (2010) 147-154.
		
		\bibitem{buse2008CAGD}
		L. Buse\' and R. Goldman, Division algorithms for Bernstein polynomials, \textit{Comput. Aided Geom. Design}, 25(9) (2008),  850-865.
		
		\bibitem{p}
		V.S. Chelyshkov, Alternative orthogonal polynomials and quadratures, \textit{Electron. Trans. Numer. Anal.}, 25 (2006), 17-26.
		
		\bibitem{q}
		V.S. Chelyshkov, Alternative Jacobi polynomials and orthogonal exponentials, \textit{Math. arXiv preprint}:1105.1838 (2011).
		
		
		\bibitem{BookAlgorithm}
		T. H. Cormen, Ch. E. Leiserson, R. L. Rivest. and C. Stein, {Introduction to Algorithms}, Third Edition, The MIT Press, (2009).
		
		\bibitem{corless2004EACA}
		R.M. Corless, Generalized companion matrices in the Lagrange basis, in \textit{Proceedings EACA}, Gonzalez-Vega L. and Recio T., Eds., (2004), 317-322.	
		
		\bibitem{ernst2010Siam}
		O. Ernst and E. Ullman, Stochastic Galerkin matrices,  \textit{SIAM J. Matrix Anal. Appl.}, 31(4) (2010), 1848-1872.
		
		\bibitem{h}
		R.T. Farouki, The Bernstein polynomial basis: A centennial retrospective, \textit{Comput. Aided Geom. Design}, 29(6) (2012), 379-419.
		
		\bibitem{farouki1988CAD}
		R.T. Farouki and V.T. Rajan, Algorithms for polynomials in Bernstein form, \textit{Comput. Aided Geom. Design}, 5(1) (1998), 1-26.
		
		\bibitem {app1}
		J.V.Z. Gathen and J. Gerhard, Modern Computer Algebra,  (Third Edition), Cambridge University Press, 2013.
		
		\bibitem{GhanemSpanos}
		R.G.  Ghanem and P.D. Spanos, Stochastic Finite Elements: A Spectral Approach, Dover Publication, N.Y, 2003.
		
		\bibitem{Giorgi2012IEEE}
		P. Giorgi, On polynomial multiplication in Chebyshev basis, \textit{IEEE Trans. Comput.}, 61(6) (2012), 780-789.
		
		\bibitem{bb}
		P. Giorgi, Efficient algorithms and implementation in exact linear algebra, \textit{Ph.D. Thesis},  Université de Montpellier, 2019.
		
		\bibitem{k}
		P.  Giorgi, B. Grenet and D.S. Roche, Generic reductions for in-place polynomial multiplication, in \textit{Proceedings ISSAC '19},  (2019) 187-194.
		
		\bibitem{14} N. Hale and A. Townsend, A fast, simple and stable Chebyshev–Legendre transform using an asymptotic formula, \textit{SIAM J. Sci. Comput.}, 36 (2014), 148-167.
		
		\bibitem{hermann1996NA}
		T. Hermann, On the stability of polynomial transformations between Taylor, B\'ezier, and Hermite forms, \textit{Numer. Algor.}, 13 (1996), 307-320.
		
		\bibitem{higham2004IMA}
		N.J. Higham, The numerical stability of barycentric Lagrange interpolation, \textit{IMA J. Numer. Anal.}, 24 (2004), 547-556.
		
		%\bibitem{karatsuba1963doklady}
		%Karatsuba A. and Ofman Y., Multiplication of many-digital numbers by automatic computers, \textit{Doklady Akad. Nauk SSSR}, 145 (1962) 293–294, Translation in \textit{Physics-Doklady} 7 (1963) 595–596.
		\bibitem{20} J. Keiner, Computing with expansions in Gegenbauer polynomials, \textit{SIAM J. Sci. Comput.}, 31 (2009),  2151-2171.
		
		\bibitem{21} J. Keiner, Fast Polynomial Transforms, Logos Verlag Berlin GmbH, 2011.
		
		\bibitem{kleindienst1993chemistry}
		H. Kleindienst and A. L\"{u}chow, Multiplication theorems for orthogonal polynomials, \textit{Int. J. Quantum Chem.}, 48 (1993), 239-247.
		
		\bibitem {app3}
		N. Koblitz, A. Menezes and S. Vanstone, The state of elliptic curve cryptography, \textit{Des. Codes Cryptogr.}, 19(2-3)  (2000), 173-193.
		
		\bibitem{g}
		B.G. Lee, Y. Park and  J. Yoo, Application of Legendre-Bernstein basis transformations to degree elevation and degree reduction, \textit{ Comput. Aided Geom. Design}, 19 (2002), 709-718.
		
		\bibitem{lima2010IEEE}
		J.B. Lima, D. Panario and Q. Wang, A Karatsuba-based algorithm for polynomial multiplication in Chebyshev form, \textit{IEEE Trans. Comput.}, 59 (2010), 835-841.
		
		\bibitem{Markett1994ConstrApprox}
		C. Markett, Linearization of the product of symmetric orthogonal polynomials, \textit{Constr. Approx.}, 10 (1994), 317-338.
		
		\bibitem{minimair2008ASCM}
		M. Minimair, Basis-Independent Polynomial Division Algorithm Applied to Division in Lagrange and Bernstein Basis, in \textit{Proceedings ASCM}, Kapur D., Ed., Lecture Notes in Computer Science, 5081 (2008), 72-86.
		
		\bibitem{s}
		L. Naserizadeh, M. Hadizadeh and A. Amiraslani, Cubature rules based on bivariate alternative degree-graded orthogonal polynomials and their applications, \textit{Math. Comp. Simul.}, 190 (2021),  231-245.
		
		\bibitem{30} S. A. Orszag, Fast eigenfunction transforms, in G. C. Rota (ed.), Science and Computers, Academic Press, New York, 1986.
		
		\bibitem{32} D. Potts, G. Steidl and M. Tasche, Fast algorithms for discrete polynomial transforms, \textit{Math. Comp.}, 67 (1998), 1577-1590.
		
		\bibitem{r}
		C. Qguz and M. Sezer, Chelyshkov collocation method for a class of mixed functional integro-differential equations, \textit{ Appl. Math. Comput.}, 259 (2015),  943-954.
		
		\bibitem{f}
		A. Rababah, Transformation of  Chebyshev-Bernstein polynomial basis, \textit{Comput. Method Appl.  Math.}, 3(4) (2003), 608-622.
		
		\bibitem{Ronveaux}
		A. Ronveaux, A. Zarzo and E. Godoy, Recurrence relation for connection coefficients between two families of orthogonal polynomials, \textit{J. Comput. Appl. Math.}, 62 (1995), 67-73.
		
		\bibitem{Sanchez1999Physics}
		J. Sanchez-Ruiz, Artes P. L., A. Martinez-Finkelshtein and J. S. Dehesa, General linearization formulae for products of continuous hypergeometric-type polynomials, \textit{J. Phys. A}  32(42) (1999), 7345-7366.
		
		%\bibitem{shakoori2004EACA}
%		A. Shakoori, The B\'ezout matrix in the Lagrange basis, in \textit{Proceedings EACA}, Gonzalez-Vega L. and Recio T., Eds., (2004), 295-299.	
	
		\bibitem{2} J. Shen, Y. Wang and J. Xia, Fast  structured Jacbi-Jacobi transforms,  \textit{Math. Comp.},  88(318) (2019), 1743-1772.
		
		\bibitem {app2}
		M. D. Shieh, J.H. Chen, W.C. Lin and H.H. Wu,  A new algorithm for high-speed modular multiplication design, \textit{IEEE Trans. Circuits Syst. I.}, 55(11) (2009), 3430-3437.
		
		\bibitem{35} R. M. Slevinsky, On the use of Hahn’s asymptotic formula and stabilized recurrence for a fast, simple and stable Chebyshev–Jacobi transform, \textit{ IMA J. Numer. Anal.}, 38(1) (2018), 102-124.
		
		\bibitem{3} A. Townsend, M. Webb and S. Olver, Fast polynomial transforms based on Toeplitz and Hankel matrices, \textit{Math. Comp.}, 87 (2018), 1913-1934.
		
		\bibitem{Ullmann2012}
		E. Ullmann, H.C. Elman and O.G.  Ernst, Efficient iterative solvers for stochastic Galerkin discretizations of log-transformed random diffusion problems, \textit{ SIAM J. Sci. Comput.}, 34(2) (2012), 659-682.
	}
	
\end{thebibliography}

\end{document}